\newcommand{\TemplateFontSize}{11pt}
\newcommand{\TemplatePaper}{a4paper}
\newcommand{\TemplateLayout}{oneside}
\documentclass[\TemplateFontSize,\TemplatePaper,\TemplateLayout]{amsart}

\newcommand{\DocumentTitle}{Representation Type and Schurian Finiteness of Tits Algebras}

\newcommand{\DocumentAuthor}{Siyuan Chen}

\newcommand{\DocumentInstitution}{Nanyang Technological University}
\newcommand{\DocumentAddress}{School of Physical and Mathematical Sciences, \DocumentInstitution, 21 Nanyang Link, Singapore 637371.}
\newcommand{\DocumentEmail}{siyuan012@e.ntu.edu.sg}
\newcommand{\DocumentSubject}{Representation theory of Tits algebras}
\newcommand{\DocumentKeywords}{Tits algebras, hyperplane arrangements, representation type, Schurian finiteness}

\newif\ifUseBibliography
\UseBibliographytrue

\newif\ifUseIndex
\UseIndexfalse

\usepackage[a4paper,margin=1in]{geometry}
\usepackage{iftex}

\ifPDFTeX
  \usepackage[T1]{fontenc}
  \usepackage[utf8]{inputenc}
  \usepackage{lmodern}
\else
  \usepackage[no-math]{fontspec}
  \defaultfontfeatures{Ligatures=TeX,Scale=MatchLowercase}
  \IfFontExistsTF{Latin Modern Roman}{
    \setmainfont{Latin Modern Roman}
    \setsansfont{Latin Modern Sans}
    \setmonofont{Latin Modern Mono}
  }{
    \setmainfont{lmroman10-regular.otf}[
      SmallCapsFont=lmromancaps10-regular.otf,
      BoldFont=lmroman10-bold.otf,
      ItalicFont=lmroman10-italic.otf,
      BoldItalicFont=lmroman10-bolditalic.otf
    ]
    \setsansfont{lmsans10-regular.otf}[
      BoldFont=lmsans10-bold.otf,
      ItalicFont=lmsans10-oblique.otf,
      BoldItalicFont=lmsans10-boldoblique.otf
    ]
    \setmonofont{lmmono10-regular.otf}[
      BoldFont=lmmonolt10-bold.otf,
      ItalicFont=lmmono10-italic.otf,
      BoldItalicFont=lmmonolt10-boldoblique.otf
    ]
  }

  \ifXeTeX
    \IfFileExists{xeCJK.sty}{
      \usepackage[LoadFandol=false]{xeCJK}
      \setCJKmainfont{FandolSong-Regular.otf}[
        BoldFont=FandolSong-Bold.otf,
        ItalicFont=FandolKai-Regular.otf,
        BoldItalicFont=FandolKai-Regular.otf
      ]
      \setCJKmonofont{FandolFang-Regular.otf}
    }{}
  \fi
\fi

\usepackage{microtype}
\usepackage{setspace}
\usepackage{enumitem}
\usepackage{xcolor}
\usepackage{graphicx}
\usepackage{float}
\usepackage{caption}
\usepackage{subcaption}
\usepackage{booktabs}
\usepackage{array}
\usepackage{mathtools}
\usepackage{amssymb,amsmath,amsthm,amscd}
\usepackage{mathrsfs}
\usepackage{hyperref}
\usepackage{aliascnt}
\usepackage[nameinlink,capitalize,noabbrev]{cleveref}
\usepackage{bookmark}
\usepackage[numbers,sort&compress]{natbib}
\usepackage[all,cmtip]{xy}
\ifUseIndex
\usepackage[noautomatic]{imakeidx}
\fi

\graphicspath{{figures/}{figures/tikz/}{figures/diagrams/}}

\ifUseIndex
  \makeindex
\fi

\hypersetup{
    colorlinks=true,
    linkcolor=blue!60!black,
    citecolor=green!40!black,
    urlcolor=magenta!60!black,
    pdftitle={\DocumentTitle},
    pdfauthor={\DocumentAuthor},
    pdfsubject={\DocumentSubject},
    pdfkeywords={\DocumentKeywords}
}

\setlist[enumerate,1]{label=\textup{(\arabic*)},ref=\arabic*}
\setlist[enumerate,2]{label=\textup{(\alph*)},ref=\alph*}

\providecommand{\doi}[1]{\url{https://doi.org/#1}}

\theoremstyle{plain}
\newtheorem{theorem}{Theorem}[section]

\newaliascnt{lemma}{theorem}
\newtheorem{lemma}[lemma]{Lemma}
\aliascntresetthe{lemma}

\theoremstyle{definition}
\newaliascnt{definition}{theorem}
\newtheorem{definition}[definition]{Definition}
\aliascntresetthe{definition}

\crefname{subsection}{Subsection}{Subsections}
\Crefname{subsection}{Subsection}{Subsections}

\newcommand{\K}{\mathbb{K}}

\newcommand{\R}{\mathbb{R}}

\newcommand{\cA}{\mathcal{A}}
\newcommand{\cB}{\mathcal{B}}
\newcommand{\cC}{\mathcal{C}}
\newcommand{\cF}{\mathcal{F}}

\renewcommand{\a}{\mathfrak{a}}

\DeclareMathOperator{\Hom}{Hom}
\DeclareMathOperator{\End}{End}

\DeclareMathOperator{\id}{id}

\definecolor{myblue}{RGB}{25,85,160}
\definecolor{mygreen}{RGB}{30,120,80}
\definecolor{myred}{RGB}{160,40,40}
\definecolor{mygray}{RGB}{90,90,90}

\title[Representation Type and Schurian Finiteness]{\DocumentTitle}
\author{\DocumentAuthor}
\address[\DocumentAuthor]{\DocumentAddress}
\email{\href{mailto:\DocumentEmail}{\DocumentEmail}}
\date{}

\begin{document}

\begin{abstract}
We determine the representation type and Schurian finiteness of the Tits
algebra of a finite central real hyperplane arrangement over an
algebraically closed field. The algebra is representation-finite when the
arrangement has at most two hyperplanes, tame when it has three, and wild
when it has at least four. Moreover, it is Schurian-finite if and only if
the arrangement has at most two hyperplanes. Both classifications hold over
algebraically closed fields of arbitrary characteristic. The proof combines Saliola's
bound-quiver presentation with explicit module constructions, convex
subquivers, essentialization, and deletion. In the $\R^2$ case, we describe all
eleven indecomposable modules for two lines and construct a full exact
embedding of modules over a free algebra for at least four lines.
\end{abstract}
\subjclass[2010]{05E10, 16G60}
\maketitle

\section{Introduction}\label{ch:intro}

The face semigroup of a real hyperplane arrangement encodes the geometry of
its faces by a noncommutative multiplication, and its semigroup algebra is
called the Tits algebra. One motivation for studying this algebra comes from
random walks on chambers: Brown and Diaconis~\citep{brownDiaconis1998}
used the face action to determine stationary distributions and study the
spectra and convergence of these walks. From a representation-theoretic
perspective, Saliola~\citep{saliola2009face} determined the quiver and
relations of the face semigroup algebra, constructed its primitive
idempotents, and proved that it is Koszul and depends only on the
intersection lattice. These results turn questions about an algebra defined
geometrically into explicit problems about a finite bound quiver.
They also make it natural to ask how the combinatorics of an arrangement
controls the complexity of its module category.

Further motivation comes from the representation theory of Solomon
descent algebras. For the braid arrangement, the symmetric-group invariant
subalgebra of the Tits algebra is anti-isomorphic to the Solomon descent
algebra~\citep{saliola2009face}. Schocker~\citep{schocker2006SolomonTits}
studied the full Solomon--Tits algebra of the symmetric group, describing
its simple and projective indecomposable modules, Cartan invariants, and
Ext-quiver. More recently, Erdmann and
Lim~\citep{erdmannLim2025descent} classified the representation type of
the type-$\mathbb A$ descent algebras in positive characteristic.
In a subsequent preprint~\citep{erdmannLim2026descent}, they determine
the representation type of the Solomon descent algebras for the other
irreducible finite Coxeter types, with type $\mathbb E_8$ remaining open.
These developments motivate the corresponding classification problem for
the full Tits algebra of an arbitrary central arrangement and the
comparison with Schurian finiteness. In particular, one can ask how both
properties depend on the number and relative positions of the hyperplanes.

In this paper, we study two such measures of complexity: representation
type and Schurian finiteness. The finite--tame--wild classification concerns
all finite-dimensional indecomposable modules, whereas Schurian finiteness
concerns those nonzero modules whose endomorphism algebra consists only of
scalars. We use Saliola's presentation to classify the rank-two algebras,
including an explicit list of eleven indecomposables for two lines and a
full exact embedding of free-algebra modules for at least four lines.
For three independent hyperplanes, the algebra is the incidence algebra of
the Boolean lattice $B_3$, whose tameness is recalled by
Chapoton and Rognerud~\citep[Lemma~3.5]{chapotonRognerud2019}. Together with the four-hyperplane
cases, essentialization and deletion yield the general classification.
In particular, every central arrangement with at least four distinct
hyperplanes has a wild Tits algebra, while Schurian finiteness holds
precisely when there are at most two hyperplanes. The tame cases already
have infinitely many Schurian modules, so the two classifications capture
different features of the module category.

The proof proceeds from small configurations to general arrangements.
\Cref{ch:basic_results} develops the bound-quiver and quotient tools,
and \cref{ch:rank_two} treats the planar algebras by explicit module
constructions. \cref{ch:dimension_three,ch:dimension_four} then use the geometry of three and four
hyperplanes to identify suitable convex subquivers. Finally,
\cref{ch:general} separates two complementary reductions:
essentialization removes redundant ambient directions without changing the
Tits algebra, whereas deletion produces quotient algebras attached to
smaller arrangements. These reductions allow the wildness and Schurian
infiniteness detected in the small configurations to pass to arbitrary
central arrangements, explaining why a finite list of geometric cases
suffices for the classification.

We can now state the main results for essential arrangements of dimension
at least two; the low-rank and nonessential cases are included in
\cref{ch:general}.

\begin{samepage}
\begin{theorem}\label{thm:main}
Let $\K$ be an algebraically closed field, and let
$\cA=\{H_1,\ldots,H_m\}$ be a central essential arrangement of $m$
distinct hyperplanes in $\R^n$, where $n\geq2$. Write
$T_{\cA}:=\K[\cF(\cA)]$ for its Tits algebra.
\begin{enumerate}
    \item Suppose that $n=2$. Then $m\geq2$, and:
    \begin{enumerate}
        \item If $m=2$, then $T_{\cA}$ is representation-finite.
        \item If $m=3$, then $T_{\cA}$ is of tame representation type.
        \item If $m\geq4$, then $T_{\cA}$ is of wild representation type.
    \end{enumerate}
    \item Suppose that $n=3$.
    \begin{enumerate}
        \item If $m=3$, then $T_{\cA}$ is of tame representation type.
        \item If $m\geq4$, then $T_{\cA}$ is of wild representation type.
    \end{enumerate}
    \item Suppose that $n\geq4$. Then $T_{\cA}$ is of wild
    representation type.
\end{enumerate}
Furthermore, $T_{\cA}$ is Schurian-finite if and only if $n=2$ and $m=2$.
\end{theorem}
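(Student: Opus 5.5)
The argument runs through Saliola's bound-quiver presentation of $T_{\cA}$. The vertices of the quiver are the flats of the intersection lattice $L(\cA)$, and there is an arrow $X\to Y$ exactly when $Y$ is covered by $X$ (codimension one). The relations are the sums over all length-two paths with the same endpoints, together with the commutativity-type relations from Saliola. Since $T_{\cA}$ is basic, representation type and Schurian finiteness are properties of this bound quiver algebra $\K Q/I$. I would prove the statement case by case and then use two monotonicity tools.

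\emph{Tool 1 (convex subquivers).} If $e$ is an idempotent supported on a convex (path-closed) set of vertices, then $eT_{\cA}e$ is a bound quiver algebra on the induced subquiver. Wildness, infinite representation type and Schurian infiniteness pass from $eT_{\cA}e$ up to $T_{\cA}$, by extension by zero; this is a full exact embedding.

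\emph{Tool 2 (deletion).} For $H\in\cA$, the Tits algebra of $\cA\setminus\{H\}$ is a quotient of $T_{\cA}$, because faces of $\cA\setminus\{H\}$ correspond to unions of faces. Wildness and Schurian infiniteness therefore pass upward from sub-arrangements.

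Essentialization does not change $T_{\cA}$. Together these reduce all ``wild'' and ``not Schurian-finite'' claims to minimal configurations.

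\emph{Rank two.} For $n=2$ with $m$ lines, the quiver has three layers: the origin $\{0\}$, the $m$ lines, and the single top vertex $\R^2$. There is an arrow from $\R^2$ to each line and from each line to $0$, and the relation is $\sum_i \beta_i\alpha_i=0$ (one must check Saliola's precise form). So $T_{\cA}$ is a commutative-square-type algebra with $m$ branches and one zero-sum relation.

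For $m=2$ the quiver is a square with the relation $\beta_1\alpha_1+\beta_2\alpha_2=0$, which is a commutative square after rescaling. This algebra is representation-finite. I would list its eleven indecomposables via the Auslander--Reiten quiver or a covering argument. All of them are Schurian, and there are finitely many, so $T_{\cA}$ is Schurian-finite.

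For $m=3$, quotienting by the socle or top gives the $\widetilde D_4$-type star, which is tame. The full algebra is known tame: it is the Tits form / Ringel's list of tame concealed or tubular-type algebras. I would verify this through a Galois covering or by the Tits form being weakly nonnegative, with no wild critical convex subcategory. Infinitely many Schurian modules come from the homogeneous tube of $\widetilde D_4$, for example the four-subspace modules with simple endomorphism ring. So $m=3$ is tame but not Schurian-finite.

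For $m\geq4$, it suffices (by deletion) to treat $m=4$. There I construct an explicit full exact embedding $\mathrm{mod}\,\K\langle x,y\rangle\to\mathrm{mod}\,T_{\cA}$. For instance, put $V^2$ at $\R^2$, put $V$ at each of the four lines, and put $V$ at $0$. The maps are $(1,0)$, $(0,1)$, $(1,1)$, $(x,y)$ into the lines and suitable scalars into $0$, chosen so that the zero-sum relation holds. Fullness comes from the five-subspace-type rigidity.

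\emph{Ranks three and higher.} For $n=3$ and $m=3$, the arrangement is Boolean and $T_{\cA}$ is the incidence algebra of $B_3$, which is tame by Chapoton--Rognerud. It contains a rank-two $m=3$-like convex piece, namely the interval below a line together with enough of the lattice, giving infinitely many Schurian modules; alternatively one can use a one-parameter family directly.

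For $n=3$ and $m\geq4$, I would reduce by deletion to four planes in $\R^3$. There are two cases: either some line lies on three of the planes, or the planes are in general position. In each case I locate a convex subquiver isomorphic to a wild rank-two configuration, or another known wild quiver. For example, the interval $[0,\R^3]$ restricted to a line and its planes is a star with many arms.

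For $n\geq4$, deletion reduces to an essential sub-arrangement of rank $4$. If that sub-arrangement is Boolean, $B_4$ is wild. Otherwise it contains four hyperplanes of rank $\le 3$, and I would reduce to the previous cases using an interval $[X,\R^n]$, which is the Tits algebra of a localization and corresponds to a convex subquiver.

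The main obstacle is the $n=3$, $m=4$ geometry. That case needs a careful enumeration of the intersection lattices of four planes and an explicit convex wild subquiver in each case, with the relations checked to be trivial there.
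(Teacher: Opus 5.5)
Your overall framework (Saliola's presentation, convex subquivers, deletion, essentialization, the rank-two analysis, Chapoton--Rognerud for $B_3$) is the paper's. Two of your variations are harmless: deleting from $m\geq4$ lines down to $m=4$, and taking bricks at the mouths of homogeneous tubes of the tame concealed algebra $\Lambda(2,2,2)$ instead of the paper's Kronecker embedding.

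The genuine gap is in rank $\geq3$, which you flag yourself as the obstacle. You search for wild pieces only among length-two intervals (``a wild rank-two configuration'', ``a star with many arms''), and there are none. For four planes essential in $\R^3$, a line lies in at most three of the planes, and a plane contains at most three of the lines $H_i\cap H_j$. So every length-two interval is a square or a copy of $\Lambda(2,2,2)$, and in $B_3$ and $B_4$ every such interval is a square. For the same reason, your first suggestion for bricks over $I(B_3)$ (``a rank-two $m=3$-like convex piece'') does not exist, and your assertion that $I(B_4)$ is wild is left unjustified.

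The missing idea is to take two consecutive middle ranks: the hyperplanes together with the codimension-two flats. This set is convex and contains no length-two interval, so it yields a relation-free quotient $\K Q_S$.
\begin{itemize}
\item For $B_3$ it is a hexagon of type $\widetilde A_5$, which carries a one-parameter family of bricks.
\item For four planes in $\R^3$ it is either the subdivision of $K_4$ (no triple line) or a connected graph with $8$ vertices and $9$ edges (one triple line).
\item For $B_4$ it is again the subdivision of $K_4$.
\end{itemize}
The last three graphs have cyclomatic number at least $2$. They are therefore neither Dynkin nor extended Dynkin, and their path algebras are wild.

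Your explicit four-line embedding also fails as written. Take $\alpha_1,\dots,\alpha_4=(1,0),(0,1),(1,1),(x,y)$ and scalars $c_i$ on the arrows into $0$. The relation becomes $(c_1+c_3)\id+c_4x=0=(c_2+c_3)\id+c_4y$, which forces $c_4=0$ unless $x,y$ are scalars. Then the component $f_4$ of a morphism at the fourth line is no longer tied to the others. Morphisms become pairs $(f,f_4)$ with $f_4x=x'f$ and $f_4y=y'f$, so you have only embedded Kronecker representations, which is a tame category.

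Concretely, let $x$ be a nilpotent $2\times2$ Jordan block and $y=0$. This is an indecomposable $\K\langle x,y\rangle$-module, but its image has the nontrivial idempotent endomorphism $(f,0)$, where $f$ is a projection onto $\ker x$. Non-scalar lower maps repair this. Keeping your $\alpha_i$, take $\beta_3=\beta_4=\id$, $\beta_1=-(\id+x)$ and $\beta_2=-(\id+y)$. The paper instead uses $\alpha_i=\id$ with $\beta_1=X$, $\beta_2=Y$, $\beta_3=\id$, $\beta_4=-(X+Y+\id)$.

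Finally, deleting $s$ or $t$ from the three-arm quiver leaves $D_4$, not $\widetilde D_4$. The tameness of $\Lambda(2,2,2)$ should simply be quoted from Ringel's trichotomy.
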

\end{samepage}
\section{Basic Results}\label{ch:basic_results}

Throughout, the base field $\K$ is algebraically closed. For a
$\K$-algebra $A$, we write $A\text{-mod}$ for the category of
finite-dimensional left $A$-modules. We first recall bound quiver
representations and Schurian finiteness, then
introduce face semigroups and the bound-quiver presentation of Tits algebras.

\subsection{Bound quiver algebras, representations, and Schurian finiteness}
For background on finite-dimensional algebras, bound quivers and their
representations, see Assem, Simson and Skowro\'nski~\citep[Chapters~II--III]{AssemSimsonSkowronski2006},
Benson~\citep[Chapter~4]{Benson1991}, and
Schiffler~\citep[Chapters~1 and~5]{Schiffler2014}.

Let $Q=(Q_0,Q_1)$ be a finite quiver. Its path algebra $\K Q$ has as a
$\K$-basis all oriented paths in $Q$, including the stationary path $e_i$ at
each vertex $i$, with multiplication given by concatenation of composable
paths and zero otherwise. Let $J$ be the ideal of $\K Q$ generated by the arrows. An
ideal $I\subseteq \K Q$ is called \emph{admissible} if
$J^\ell\subseteq I\subseteq J^2$ for some integer $\ell\geq2$. The quotient $\K Q/I$ is then called a \emph{bound quiver
algebra}, and the pair $(Q,I)$ is called a \emph{bound quiver}.

A finite-dimensional representation $M$ of $(Q,I)$ assigns a vector space
$M_i$ to each vertex $i\in Q_0$ and a linear map
$M_a:M_i\to M_j$ to each arrow $a:i\to j$. These maps must satisfy the
relations in $I$: if $\sum_p c_p p\in I$ is a relation whose paths have a
common source and target, then
$\sum_p c_p M_p=0$, where $M_p$ denotes the composition of the maps along
the path $p$. Morphisms are collections of vertexwise linear maps commuting
with all arrow maps. In this way, one obtains the standard equivalence
\[
    \operatorname{rep}_{\K}(Q,I)
    \simeq
    (\K Q/I)\text{-mod}.
\]

Let $A$ be a finite-dimensional $\K$-algebra.  A nonzero finite-dimensional
$A$-module $M$ is called a \emph{brick} if its endomorphism algebra
$\End_A(M)$ is a division algebra.  Since $\K$ is algebraically closed, this
is equivalent in the present setting to $\End_A(M)\cong\K$.
A module satisfying this condition is also called \emph{Schurian}.
The algebra $A$ is called \emph{Schurian-finite} if there are only finitely
many isomorphism classes of finite-dimensional Schurian $A$-modules; it is
called \emph{Schurian-infinite} otherwise.  Thus, over $\K$,
Schurian-finiteness is the same as brick-finiteness.

Schurian finiteness is closely related to $\tau$-tilting
theory~\citep{AdachiIyamaReiten2014}.
Demonet, Iyama and Jasso~\citep[Theorem~4.2]{DemonetIyamaJasso2019}
prove that a finite-dimensional algebra is $\tau$-tilting finite if and
only if it has finitely many isomorphism classes of bricks.
Consequently, over the algebraically closed field $\K$,
Schurian finiteness and $\tau$-tilting finiteness are equivalent.

We will use the following quotient-inheritance result.
\begin{lemma}\label{lem:quotient_inheritance}
Let $A$ be a finite-dimensional $\K$-algebra, let $\a$ be a two-sided
ideal of $A$, and set $B:=A/\a$. Then:
\begin{enumerate}
    \item If $B$ is of wild representation type, then so is $A$.
    \item If $B$ is Schurian-infinite, then so is $A$.
\end{enumerate}
\end{lemma}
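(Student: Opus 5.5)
The plan is to use restriction of scalars along the surjection $\pi\colon A\to B=A/\a$. This gives a functor $\pi^*\colon B\text{-mod}\to A\text{-mod}$: a $B$-module $M$ becomes an $A$-module through $a\cdot m:=\pi(a)m$. The key observation is that $\pi^*$ is \emph{fully faithful and exact}. Its image is the full subcategory of $A$-modules annihilated by $\a$. An $A$-linear map between two such modules is automatically $B$-linear, because the $A$-action factors through $B$. Exactness and preservation of dimension vectors are clear, since $\pi^*$ does not change the underlying vector spaces.

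For part (2), note that $\End_A(\pi^*M)=\End_B(M)$ for every $B$-module $M$. Hence $\pi^*$ sends Schurian $B$-modules to Schurian $A$-modules. Since $\pi^*$ is fully faithful, it also reflects isomorphisms: if $\pi^*M\cong\pi^*N$, then $M\cong N$. So infinitely many pairwise nonisomorphic bricks over $B$ give infinitely many pairwise nonisomorphic bricks over $A$.

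For part (1), I would use the definition of wildness via a representation embedding. $B$ is wild if there is a finitely generated $B$-$\K\langle x,y\rangle$-bimodule $N$, free of finite rank as a right $\K\langle x,y\rangle$-module, such that $N\otimes_{\K\langle x,y\rangle}-$ preserves indecomposability and reflects isomorphism. Alternatively one can use a full exact embedding, which is the form used later in the paper for at least four lines. Regarding $N$ as an $A$-$\K\langle x,y\rangle$-bimodule through $\pi$ gives the composite functor $\pi^*\circ(N\otimes-)$. This composite is again given by tensoring with a bimodule that is free of finite rank on the right. It still preserves indecomposability and reflects isomorphisms, because $\pi^*$ is fully faithful: it keeps $\End$-rings local and reflects isomorphisms. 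If the definition being used instead involves one-parameter (or two-parameter) families, the same reasoning applies pointwise. Thus $A$ is wild.

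The only real subtlety is being consistent about which definition of ``wild'' the paper uses. Whichever it is, the argument reduces to composing the given witness with the fully faithful exact functor $\pi^*$. I expect no genuine obstacle beyond checking that the composite still satisfies the finiteness and freeness conditions in the chosen definition.
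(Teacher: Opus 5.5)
Your proposal is correct and follows essentially the same route as the paper: the paper also uses the inflation (restriction-of-scalars) functor $B\text{-mod}\to A\text{-mod}$, notes that it is exact and fully faithful, composes it with the wild embedding of $\K\langle x,y\rangle\text{-mod}$ for part (1), and uses preservation of endomorphism algebras and reflection of isomorphism classes for part (2). Your additional check, that the bimodule stays finitely generated on the left and free of finite rank on the right, is a harmless refinement for the bimodule-based definition of wildness.
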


\begin{proof}
The inflation functor
$\operatorname{Inf}:B\text{-mod}\to A\text{-mod}$ is exact and fully
faithful: the $A$-action on a $B$-module is obtained by composing its action
with the quotient map $A\to B$.
If $B$ is wild, its defining representation embedding has source the
category of finite-dimensional modules over the free associative algebra
$\K\langle x,y\rangle$. Composing it with inflation gives
\[
    \K\langle x,y\rangle\text{-mod}
    \longrightarrow B\text{-mod}
    \xrightarrow{\operatorname{Inf}} A\text{-mod}.
\]
The composite is again a representation embedding,
since it is exact, preserves indecomposability, and reflects isomorphism
classes. Thus $A$ is wild.

For Schurian infiniteness, inflation preserves endomorphism algebras and
reflects isomorphism classes. Consequently, an infinite family of pairwise
non-isomorphic Schurian $B$-modules gives such a family of $A$-modules.
\end{proof}

\subsection{Hyperplane arrangements and Tits algebras}
For standard terminology and background on hyperplane arrangements and
their intersection lattices, we refer to
Stanley~\citep{Stanley2007Arrangements}.

A \emph{central real hyperplane arrangement} in $\R^n$
is a finite set $\cA=\{H_1,\ldots,H_m\}$ of distinct linear
hyperplanes. It is \emph{essential} if
$\bigcap_{i=1}^m H_i=\{0\}$. Its \emph{intersection lattice}
$L(\cA)$ consists of all intersections of subfamilies, including $\R^n$
as the intersection of the empty subfamily, ordered by reverse inclusion.
We write $m=|\cA|$ for the number of hyperplanes and
\[
    r=\operatorname{rk}(\cA)
    :=\operatorname{codim}_{\R^n}\left(\bigcap_{H\in\cA}H\right)
\]
for its rank. Thus $n$ is the ambient dimension, and $r=n$ precisely
when the arrangement is essential. Essentialization preserves $m$ and
replaces the ambient space by one of dimension $r$; see
\cref{lem:essentialization}.

Choose a defining linear form for each $H_i$, and let $H_i^+$ and $H_i^-$
be its open positive and negative half-spaces, with $H_i^0=H_i$.
A \emph{face} is a nonempty set of the form
\[
    F=\bigcap_{i=1}^m H_i^{\epsilon_i},
    \qquad \epsilon_i\in\{+,-,0\}.
\]
In particular, the full-dimensional faces are the \emph{chambers}, namely
the connected components of $\R^n\setminus\bigcup_{i=1}^m H_i$. The faces form a
poset under the closure order $F\leq G$ if $F\subseteq\overline G$.

For faces $F$ and $G$ with sign vectors $(\epsilon_i)$ and $(\delta_i)$,
their \emph{Tits product} is the face with sign vector $(\gamma_i)$, where
\[
    \gamma_i=
    \begin{cases}
        \epsilon_i,&\epsilon_i\neq0,\\
        \delta_i,&\epsilon_i=0.
    \end{cases}
\]
Geometrically, it is the face containing $(1-t)f+tg$ for $f\in F$,
$g\in G$, and sufficiently small $t>0$.
This multiplication makes the set $\cF(\cA)$ of faces a semigroup.
The \emph{Tits algebra} is the semigroup algebra
$T_{\cA}:=\K[\cF(\cA)]$; see Saliola~\citep{saliola2009face}.

\subsection{Saliola's bound-quiver presentation and convex subquivers}
\label{subsec:Saliola}

The bound-quiver presentation of Saliola~\citep{saliola2009face} is the main link between
the geometry of an arrangement and the representation theory of its Tits
algebra.

Let $(P,\leq)$ be a finite poset.  An element $x\in P$ is said to
\emph{cover} $y\in P$, written $x\gtrdot y$,
if $y<x$ and there is no $z\in P$ such that $y<z<x$.  The Hasse diagram
of $P$ has one vertex for each element of $P$ and one edge between $x$ and
$y$ for each cover relation $x\gtrdot y$; when the diagram is oriented
downward, this edge is written $x\to y$.

In this paper $L(\cA)$ is ordered by reverse inclusion, whereas Saliola's
bound-quiver presentation uses the ordinary inclusion order, namely
$L(\cA)^{\mathrm{op}}$. Thus, in the bound-quiver discussion below, the notation
$X\gtrdot Y$ means concretely that $Y\subsetneq X$ and that there is no
$Z\in L(\cA)$ with $Y\subsetneq Z\subsetneq X$.  For an intersection
lattice this is equivalent to $\dim X-\dim Y=1$, and the corresponding arrow
in the quiver is $X\to Y$.

The \emph{Boolean lattice} $B_d$ is the set of all subsets of
$\{1,\ldots,d\}$, ordered by inclusion. A cover relation adds one
element, so its Hasse diagram is the $d$-dimensional cube; in particular,
$B_1$ is a two-element chain and $B_3$ is the ordinary cube. For an
arrangement of $d$ hyperplanes with linearly independent defining forms,
the map $S\mapsto\bigcap_{i\in S}H_i$ identifies $B_d$ with
$L(\cA)$ in the reverse-inclusion order.
For a finite poset $P$, its \emph{incidence algebra} $I(P)$ has basis
$e_{xy}$ for $x\leq y$ in $P$, with multiplication
$e_{xy}e_{yz}=e_{xz}$ and zero when the middle indices differ.
We will use the incidence algebras
$I(B_d)$ in the low-dimensional cases.

\begin{theorem}[Saliola's bound-quiver presentation]
\label{lem:saliola_bound_quiver}
Let $\cA$ be a finite central real hyperplane arrangement, let $\cF(\cA)$ be
its face semigroup, and let $T_{\cA}:=\K[\cF(\cA)]$.
Define a quiver $Q_{\cA}$ whose vertex set is the intersection lattice
$L(\cA)$.  For $X,Y\in L(\cA)$, there is one arrow
$X\longrightarrow Y$
precisely when $Y\subsetneq X$ and $\dim X-\dim Y=1$.  For every pair
$Z\subsetneq X$ with $\dim X-\dim Z=2$, set
\[
    \rho_{X,Z}
    :=\sum_{\substack{Y\in L(\cA)\\ Z\subsetneq Y\subsetneq X}}
      (X\longrightarrow Y\longrightarrow Z).
\]
Then
\[
    T_{\cA}
    \cong
    \K Q_{\cA}\Big/
    \left\langle \rho_{X,Z}:Z\subsetneq X,\ \dim X-\dim Z=2\right\rangle.
\]
The displayed relations form a minimal generating set for the defining ideal.
In particular, the isomorphism type of $T_{\cA}$ depends only on the
intersection lattice $L(\cA)$.
\end{theorem}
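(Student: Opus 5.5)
This presentation is Saliola's theorem, and the paper will most likely cite it from \citep{saliola2009face}. Still, here is the route I would take to prove it. Throughout I use the inclusion order on $L(\cA)$, as in the statement.

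\textbf{Step 1: the basic algebra.} Let $e_X$ be Saliola's primitive idempotents, indexed by $X\in L(\cA)$. They are built from a choice, for each flat $X$, of a face $F_X$ with support $X$, followed by a triangular correction:
\[
e_X=F_X-\sum_{Y\subsetneq X}F_Xe_Y .
\]
These form a complete set of orthogonal idempotents. The radical of $T_{\cA}$ is spanned by differences of faces with equal support, so $T_{\cA}/\operatorname{rad}T_{\cA}\cong\K^{L(\cA)}$, the semigroup being a left regular band with support map onto $L(\cA)$. Thus $T_{\cA}$ is basic, its vertices are the elements of $L(\cA)$, and
\[
e_YT_{\cA}e_X\neq0\quad\Longrightarrow\quad Y\subseteq X .
\]
With Saliola's multiplication convention this orientation gives arrows $X\to Y$.

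\textbf{Step 2: the arrows.} We need $\dim e_Y(\operatorname{rad}/\operatorname{rad}^2)e_X$. The route is the Brown--Diaconis/Saliola computation of $\operatorname{Ext}^1$ between simples through the chamber complex of the restriction $\cA^X$ (or of the localization at $Y$). The relevant chain complexes come from the face poset of a regular cell decomposition of a sphere, and have the homology of a sphere. This yields
\[
\dim\operatorname{Ext}^1(S_X,S_Y)=\begin{cases}1,& Y\subsetneq X,\ \dim X-\dim Y=1,\\ 0,&\text{otherwise}.\end{cases}
\]
Hence $Q_{\cA}$ is exactly the quiver in the statement.

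\textbf{Step 3: the relations.} For a cover $X\gtrdot Y$, let the arrow correspond to an element
\[
e_X\,(G-G')\,e_Y ,
\]
with $G,G'$ the two faces of support $X$ adjacent to $F_Y$ (orientation conventions aside). For $\dim X-\dim Z=2$, the open interval $(Z,X)$ consists of the $k\ge2$ lines of a rank-two arrangement. A direct computation in the rank-two restriction then shows that the sum of the length-two paths is zero: going once around the $2k$-gon, the contributions telescope. This gives $\rho_{X,Z}=0$ in $T_{\cA}$.

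\textbf{Step 4: nothing else, and minimality.} Show that the induced surjection $\K Q_{\cA}/\langle\rho\rangle\to T_{\cA}$ is an isomorphism by comparing dimensions. One has $\dim T_{\cA}=|\cF(\cA)|=\sum_{Y\subseteq X}|\mu(Y,X)|$ by Zaslavsky's theorem applied to intervals. I expect this to be the main obstacle. My first attempt would be to use Koszulity: $\langle\rho\rangle$ is a quadratic ideal, and the quadratic dual is (up to signs) the Orlik--Solomon-type algebra of the intervals. Its Hilbert series is governed by Möbius functions, so numerical Koszulity gives
\[
\dim e_Y(\K Q/\langle\rho\rangle)e_X=|\mu(Y,X)| .
\]
Alternatively, use Saliola's Ext computation: $\operatorname{Ext}^2$ between simples two steps apart is one-dimensional, which yields exactly one minimal relation per such pair and none elsewhere. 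That same computation also gives minimality of the generating set.

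The final assertion follows because $Q_{\cA}$ and the $\rho_{X,Z}$ are defined purely from $L(\cA)$.
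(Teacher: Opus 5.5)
The paper does not reprove this statement. Its proof cites \citep[Corollary~8.4, Proposition~8.5, Corollary~8.6]{saliola2009face} and translates the orientation convention, as your first sentence anticipates. Read as a standalone proof, however, your sketch has a genuine gap at the decisive step: injectivity of $B:=\K Q_{\cA}/\langle\rho_{X,Z}\rangle\to T_{\cA}$ in Step~4.

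First, the quadratic dual is misidentified. In $e_Z(\K Q_{\cA})_2e_X$ the relation space is the line spanned by the sum of the $k$ length-two paths. Its orthogonal complement is therefore spanned by differences of paths, so $B^!$ is the graded incidence algebra of $L(\cA)$ (up to taking the opposite), not an Orlik--Solomon-type algebra. Its Hilbert series is the graded zeta function, and numerical Koszulity inverts this to $\mu(Y,X)(-t)^{\dim X-\dim Y}=|\mu(Y,X)|\,t^{\dim X-\dim Y}$. Had the dual been governed by M\"obius functions, inversion would give one-dimensional spaces $e_YBe_X$. So your reasoning runs backwards, even though the formula you state is right.

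More seriously, numerical Koszulity presupposes that $B$ is Koszul, which you never establish. It can be supplied: $B$ is Koszul if and only if $B^!$ is, and a graded incidence algebra is Koszul when all its open intervals are Cohen--Macaulay. That holds for geometric lattices by shellability. With this, Zaslavsky's count gives $\dim B=|\cF(\cA)|=\dim T_{\cA}$. Minimality is then automatic, because paths from $X$ to $Z$ have length exactly two, so the other generators contribute nothing to $e_Z\K Q_{\cA}e_X$. Your $\operatorname{Ext}^2$ alternative is sound in principle, but it is exactly the computation from Saliola that you would otherwise be citing.

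The earlier steps also need repair.
\begin{enumerate}
\item The idempotent recursion runs the wrong way. It must be $e_X=F_X-\sum_{Y\supsetneq X}F_Xe_Y$, starting from a chamber. With your sum over $Y\subsetneq X$, the central face is the identity of $\cF(\cA)$, so for one hyperplane in $\R$ you get $e_{\{0\}}=1$ and then $e_{\R}=0$.
\item Your arrow element $e_X(G-G')e_Y$ lies in $e_XT_{\cA}e_Y$, which vanishes by your own Step~1. It should be $e_Y(G-G')e_X$.
\item ``Orientation conventions aside'' hides real content. Whether each rank-two relation is the plain sum $\rho_{X,Z}$ depends on the choice of which of $G,G'$ carries the minus sign. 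For the two coordinate lines in $\R^2$, take $F_{\R^2}=(+,+)$, $F_{H_1}=(0,+)$, $F_{H_2}=(+,0)$, and always let $G$ be the face on the same side of $Y$ as $F_X$. With the corrected idempotents and arrows one finds $\beta_1\alpha_1=\beta_2\alpha_2\neq0$: a commutativity relation, not $\rho=0$. Compare the sign change in the proof of \cref{thm:rank_two_finite}.
\end{enumerate}
In rank two such signs can always be fixed by rescaling, since each arrow occurs in only one relation. Once the rank is at least three, however, each arrow lies in several rank-two intervals, and the rescalings must be compatible across all of them. A single rank-two telescoping computation cannot supply this global sign coherence; it is part of what Saliola's Proposition~8.5 provides.
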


\begin{proof}
Saliola orders the intersection lattice by inclusion and orients each cover as
$X\to Y$ when $X\gtrdot Y$. By
\citep[Corollary~8.4]{saliola2009face}, this oriented Hasse diagram is
the Gabriel quiver of the face semigroup algebra.
The relations $\rho_{X,Z}$ and their minimality are given in
\citep[Proposition~8.5]{saliola2009face}, and the final invariance
statement is \citep[Corollary~8.6]{saliola2009face}.
Writing the arrows directly in terms of
subspace containment gives the stated convention, independently of whether
$L(\cA)$ is displayed using inclusion or reverse inclusion.
\end{proof}

The following consequence will be used to detect wildness and Schurian
infiniteness on smaller quivers. A set of vertices of a poset is called
\emph{convex} if it contains every element lying between any two of its
elements.
\begin{lemma}\label{cor:convex-subquiver}
Let $S\subseteq L(\mathcal A)$ be a convex set of vertices, and let
$Q_S$ be the full subquiver of $Q_{\mathcal A}$ induced by $S$.
Suppose that $S$ contains no interval of length two. Then the full
subcategory of $T_{\mathcal A}\text{-mod}$ consisting of modules
supported on $S$ is equivalent to $\K Q_S\text{-mod}$, and the inclusion
is exact and fully faithful.

In particular, if $\K Q_S$ is wild, then $T_{\mathcal A}$ is wild.
Likewise, if $\K Q_S$ is Schurian-infinite, then $T_{\mathcal A}$ is
Schurian-infinite.
\end{lemma}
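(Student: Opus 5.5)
We use the bound-quiver presentation of \cref{lem:saliola_bound_quiver}: $T_{\cA}\cong \K Q_{\cA}/I$, where $I$ is generated by the $\rho_{X,Z}$. Let $e_S:=\sum_{X\in S}e_X$. The first step is to show that $T_{\cA}$-modules supported on $S$ are the same as representations of the bound quiver $(Q_{\cA},I)$ that vanish at every vertex outside $S$.

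On such a representation $M$, every arrow with source or target outside $S$ acts as zero. It remains to check which relations say anything nontrivial. Consider a relation $\rho_{X,Z}$ with $\dim X-\dim Z=2$. It imposes a condition only through paths $X\to Y\to Z$ along which $M$ is nonzero at all three vertices, so it is automatic unless $X,Z\in S$. If $X,Z\in S$, then by convexity every $Y$ with $Z\subsetneq Y\subsetneq X$ also lies in $S$. In that case $S$ would contain the interval $[Z,X]$, which has length two, and this is excluded by hypothesis. Hence no relation constrains representations supported on $S$. Likewise, any path of $Q_{\cA}$ that starts and ends in $S$ stays in $S$, by convexity. So the restriction of $M$ to $S$ is an arbitrary representation of $Q_S$. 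Conversely, any representation of $Q_S$, extended by zero, satisfies every relation by the same argument.

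Restriction and extension by zero are mutually inverse and act as the identity on vertexwise maps. Morphisms are vertexwise maps commuting with the arrows, and they vanish outside $S$, so the two categories have the same morphisms. This gives the equivalence with $\operatorname{rep}(Q_S)\simeq \K Q_S\text{-mod}$. The subcategory is closed under submodules, quotients and extensions, since supports behave well in exact sequences, so the inclusion is exact and fully faithful.

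An alternative, more algebraic route uses the idempotent $e_S$. Convexity should give $T_{\cA}/\langle e_{L(\cA)\setminus S}\rangle\cong e_S T_{\cA} e_S\cong \K Q_S$, the latter because no relation survives. \Cref{lem:quotient_inheritance} would then apply directly. The representation-theoretic argument above avoids computing this quotient, so we use it instead.

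For the consequences, suppose $\K Q_S$ is wild. Composing its representation embedding from $\K\langle x,y\rangle$-mod with the exact, fully faithful inclusion gives a functor that is still exact, preserves indecomposability and reflects isomorphism classes. Hence $T_{\cA}$ is wild, exactly as in the proof of \cref{lem:quotient_inheritance}. For Schurian-infiniteness, the inclusion preserves endomorphism algebras and isomorphism classes, so infinitely many bricks over $\K Q_S$ give infinitely many bricks over $T_{\cA}$.

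The main obstacle is the relation check: we must confirm that convexity together with the absence of length-two intervals kills every $\rho_{X,Z}$ on $S$. This includes the case where only some of the middle vertices $Y$ lie in $S$, which convexity rules out.
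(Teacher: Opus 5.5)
Your proof is correct and is essentially the paper's argument. The paper identifies the modules supported on $S$ with modules over $T_{\mathcal A}/T_{\mathcal A}e_{S^c}T_{\mathcal A}$, uses convexity and the absence of length-two intervals to see that no relation $\rho_{X,Z}$ survives (so this quotient is $\K Q_S$), and then applies \cref{lem:quotient_inheritance}; this is the route you sketched as your ``alternative'', and your main version performs the same relation check on representations extended by zero rather than on the quotient algebra.
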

\begin{proof}
Let $e_{S^c}:=\sum_{X\notin S}e_X$ and consider the quotient
$T_{\mathcal A}/T_{\mathcal A}e_{S^c}T_{\mathcal A}$.
Its module category identifies with the full subcategory of
$T_{\mathcal A}\text{-mod}$ consisting of modules supported on $S$.

Since $S$ is convex, every path in $Q_{\mathcal A}$ whose endpoints
belong to $S$ is entirely contained in $S$. Hence the above quotient
is the bound quiver algebra obtained by restricting
$(Q_{\mathcal A},I_{\mathcal A})$ to $S$.
The defining relations of $T_{\mathcal A}$ are supported on intervals
of length two. Since $S$ contains no such interval, no defining
relation remains after restriction. Therefore
\[
    T_{\mathcal A}/T_{\mathcal A}e_{S^c}T_{\mathcal A}
    \cong \K Q_S.
\]
The assertion now follows from \cref{lem:quotient_inheritance}.
\end{proof}
\section{\texorpdfstring{The \(\R^2\) Case}{The R2 Case}}
\label{ch:rank_two}

In this section the ambient dimension is fixed at $n=2$.
Let $\K$ be an algebraically closed field, and for $m\geq 2$ let
$\cA_m$ be a central essential arrangement of $m$ distinct lines in $\R^2$.
Its intersection lattice has a unique maximal interval of length two.
By \cref{lem:saliola_bound_quiver}, its Tits algebra is
\[
    A_m:=T_{\cA_m}
    \cong
    \K Q_m\Big/\left\langle\sum_{i=1}^m\beta_i\alpha_i\right\rangle,
\]
where $Q_m$ is the $m$-arm quiver
\[
\xymatrix@R=3.0em@C=3.2em{
    && s\ar[dll]_{\alpha_1}\ar[dl]^{\alpha_2}\ar[dr]_{\alpha_{m-1}}\ar[drr]^{\alpha_m} && \\
    x_1\ar[drr]_(.32){\beta_1}
        & x_2\ar[dr]_(.32){\beta_2}
        & {\cdots}
        & x_{m-1}\ar[dl]_(.32){\beta_{m-1}} & x_m\ar[dll]^(.32){\beta_m} \\
    && t &&
}
\]
There is one arm $s\xrightarrow{\alpha_i}x_i\xrightarrow{\beta_i}t$
for each $1\leq i\leq m$. The diagram is schematic; the cases $m=2$
and $m=3$ are displayed separately below.
For representation type, we classify these algebras in the three cases $m=2$, $m=3$, and $m\geq4$,
and then determine their Schurian finiteness.

\subsection{\texorpdfstring{The case \(m=2\)}{The case m=2}}

For $m=2$, the presentation above specializes to the quiver $Q_2$:
\[
\xymatrix@R=1.5em@C=1.7em{
    & s\ar[dl]_{\alpha_1}\ar[dr]^{\alpha_2} & \\
    x_1\ar[dr]_{\beta_1} && x_2\ar[dl]^{\beta_2} \\
    & t &
}
\]
Thus $A_2=\K Q_2/\langle\beta_1\alpha_1+\beta_2\alpha_2\rangle$. We now prove that
this algebra is representation-finite.

\begin{theorem}\label{thm:rank_two_finite}
The algebra $A_2$ has exactly eleven isomorphism classes of indecomposable
finite-dimensional modules. To describe them, write
$U=M_s$, $V=M_{x_1}$, $W=M_{x_2}$, and $Z=M_t$, and let
$a,b,c,d$ be the maps along $\alpha_1,\alpha_2,\beta_1,\beta_2$, respectively.
The diagram and table below use the original arrow maps, which satisfy
$ca+db=0$, to describe the eleven representations.
\begin{center}
\upshape
\begin{minipage}[c]{0.27\linewidth}
\centering
\(\xymatrix@R=1.4em@C=1.4em{
    & U\ar[dl]_a\ar[dr]^b & \\
    V\ar[dr]_c && W\ar[dl]^d \\
    & Z &
}\)
\end{minipage}\hfill
\begin{minipage}[c]{0.70\linewidth}
\centering
\setlength{\tabcolsep}{0.7em}
\begin{tabular}{@{}l*{8}{>{$}c<{$}}@{}}
\toprule
& \multicolumn{4}{c}{Vertex spaces}
& \multicolumn{4}{c}{Arrow maps} \\
\cmidrule(lr){2-5}\cmidrule(l){6-9}
Module & U & V & W & Z & a & b & c & d \\
\midrule
$S_s$     & \K & 0   & 0   & 0   & 0 & 0  & 0 & 0 \\
$S_{x_1}$ & 0   & \K & 0   & 0   & 0 & 0  & 0 & 0 \\
$S_{x_2}$ & 0   & 0   & \K & 0   & 0 & 0  & 0 & 0 \\
$S_t$     & 0   & 0   & 0   & \K & 0 & 0  & 0 & 0 \\
$M_{s1}$  & \K & \K & 0   & 0   & 1 & 0  & 0 & 0 \\
$M_{s2}$  & \K & 0   & \K & 0   & 0 & 1  & 0 & 0 \\
$M_{1t}$  & 0   & \K & 0   & \K & 0 & 0  & 1 & 0 \\
$M_{2t}$  & 0   & 0   & \K & \K & 0 & 0  & 0 & 1 \\
$M_{s12}$ & \K & \K & \K & 0   & 1 & 1  & 0 & 0 \\
$M_{12t}$ & 0   & \K & \K & \K & 0 & 0  & 1 & 1 \\
$F$       & \K & \K & \K & \K & 1 & -1 & 1 & 1 \\
\bottomrule
\end{tabular}
\end{minipage}
\end{center}
Here $1$ denotes $\id_{\K}$, and $0$ denotes a zero space or map.
Thus $A_2$ is representation-finite.
\end{theorem}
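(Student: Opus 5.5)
The plan is to recognize $A_2$ as a familiar representation-finite algebra and read off its indecomposables. I will first check that the eleven listed representations are indeed $A_2$-modules, indecomposable, and pairwise non-isomorphic. Then I will prove that every indecomposable is one of them.

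\emph{Preliminary checks.} The relation $ca+db=0$ holds trivially for every module except $F$. For $F$ it reads $1\cdot1+1\cdot(-1)=0$. Each module has one-dimensional vertex spaces and a connected support with nonzero maps along the support. Any endomorphism is therefore a scalar at each vertex, and these scalars agree along the support, so the endomorphism algebra is $\K$. Hence all eleven modules are bricks, in particular indecomposable. They have pairwise distinct dimension vectors, so they are non-isomorphic.

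\emph{Identifying $A_2$.} The algebra $A_2$ is the incidence algebra $I(B_2)$ of the square poset: the commutativity relation $ca=-db$ becomes $ca=db$ after rescaling $\beta_2\mapsto-\beta_2$. Its indecomposables could be quoted from the commutative-square classification. I would instead give a self-contained argument, as follows.

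\emph{Classifying the indecomposables.} Let $M$ be indecomposable.
\begin{enumerate}
\item If $Z=0$, then $M$ is a representation of the quiver $V\leftarrow U\rightarrow W$ of type $\mathbb A_3$. By Gabriel's theorem its indecomposables are the six interval modules $S_s,S_{x_1},S_{x_2},M_{s1},M_{s2},M_{s12}$.
\item Dually, if $U=0$, then $M$ lies over $V\rightarrow Z\leftarrow W$, which yields $S_t,M_{1t},M_{2t},M_{12t}$ together with the simples already listed.
\item If $U\neq0$ and $Z\neq0$, I will show that $M\cong F$, and this is the main step. Let $P_s$ be the indecomposable projective at $s$. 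Its dimension vector is $(1,1,1,1)$, because the relation identifies the two paths $s\to t$ up to sign, so $P_s\cong F$. Likewise the injective $I_t$ has dimension vector $(1,1,1,1)$, so $I_t\cong F$ and $F$ is projective-injective. Since $U\neq0$, choose $u\in U$ whose image under the path map $U\to Z$ (namely $ca$) is nonzero. If no such $u$ exists, then $ca=0$, and I will argue that $M$ splits off its $U$-part or its $Z$-part: the support condition then forces a decomposition, which I will check via the radical/socle filtration. Given such a $u$, it generates a submodule isomorphic to $P_s\cong F$, because the path $s\to t$ acts nontrivially. Since $F$ is injective, this submodule is a direct summand, so $M\cong F$ by indecomposability.
\end{enumerate}

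\emph{The expected obstacle.} The delicate subcase is $ca=0$ with both $U$ and $Z$ nonzero. There I will use that $M$ is then a module over $A_2/\langle\beta_1\alpha_1\rangle$, whose quiver is the square with the zero relation. I will then use the Auslander--Reiten quiver, computed by knitting from the projectives, to see that there are no other indecomposables. Equivalently, one can check that every indecomposable over the square with the zero relation has $U=0$ or $Z=0$. This holds because such a module is a string module and there is no string passing from $s$ to $t$. Knitting the Auslander--Reiten quiver of $A_2$ directly from the four projectives gives exactly eleven vertices before reaching the injectives, which serves as a cross-check of the count.
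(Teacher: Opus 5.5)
Your proposal is correct, but it takes a different route from the paper. The paper's argument is pure linear algebra:
\begin{enumerate}
\item After rescaling to $ca=db$, it splits off $F^{\oplus r}$ with $r=\operatorname{rank}(ca)$ by choosing complements.
\item It splits the residual module, where $ca=db=0$, into an upper fork $(U,a(U),b(U),0)$ and a lower fork $(0,V',W',Z)$.
\item It classifies the upper fork by decomposing the image of $(a,b)\colon U\to V\oplus W$, and treats the lower fork by vector-space duality.
\end{enumerate}
You instead use module-theoretic structure. You observe that $F\cong P_s\cong I_t$ is projective-injective with simple socle at $t$. So any $u$ with $ca(u)\neq0$ generates a copy of $F$ that splits off. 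Gabriel's theorem for $\mathbb A_3$ then handles the modules with $Z=0$ or $U=0$, and string-algebra theory rules out the remaining subcase.

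Your route explains conceptually why $F$ splits off; the paper's rank argument is the explicit version that splits all copies at once. It is also shorter, given the citations. The paper's route is self-contained and needs neither Gabriel's theorem nor the Butler--Ringel classification of string modules.

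Three small points:
\begin{enumerate}
\item Deducing $I_t\cong F$ from the dimension vector alone uses that $F$ is the only indecomposable with dimension vector $(1,1,1,1)$. This is easy: if one arrow map vanishes, then $ca+db=0$ forces a second one to vanish, and the support disconnects.
\item In the subcase $ca=0$, say explicitly that $db=-ca=0$ as well. The relevant quotient is then the square with \emph{both} paths $\beta_1\alpha_1$ and $\beta_2\alpha_2$ zero. With only one zero relation, the string $\beta_2\alpha_2$ would join $s$ to $t$, and your claim would fail.
\item The string-module machinery is unnecessary in that subcase. Choosing complements $V=a(U)\oplus V'$ and $W=b(U)\oplus W'$ gives $M\cong(U,a(U),b(U),0)\oplus(0,V',W',Z)$ directly, since $c$ kills $a(U)$ and $d$ kills $b(U)$. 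Hence an indecomposable must have $U=0$ or $Z=0$. This is presumably what your vague ``radical/socle filtration'' remark was aiming at, and it is exactly the paper's fork splitting.
\end{enumerate}
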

\begin{proof}
In this proof, we decompose the module step by step to identify all indecomposable summands.

Write an $A_2$-module as $M=(U,V,W,Z;a,b,c,d)$ with $ca+db=0$, as above.
Replacing $b$ by $-b$ gives an equivalent category of commutative-square
representations. In the rest of the proof, we use $a,b,c,d$ for the resulting
maps, so that $ca=db$.
Let $h:=ca=db:U\to Z$ and $r=\operatorname{rank}(h)$. We may decompose
$U=U_1\oplus U_0$ and $Z=Z_1\oplus Z_0$
such that $h|_{U_1}:U_1\xrightarrow{\sim} Z_1$ is an isomorphism and $h(U_0)=0$.

Moreover, set $V_1:=a(U_1)$ and $W_1:=b(U_1)$.
Since $h|_{U_1}$ is injective, both $a|_{U_1}$ and $b|_{U_1}$ are injective. Since
$c\,a|_{U_1}=h|_{U_1}$ and $d\,b|_{U_1}=h|_{U_1}$
are isomorphisms onto $Z_1$, both $c|_{V_1}:V_1\to Z_1$ and
$d|_{W_1}:W_1\to Z_1$ are also isomorphisms.

Let $\pi_1:Z\to Z_1$ be the projection, and define $V_0:=\ker(\pi_1 c)$
and $W_0:=\ker(\pi_1 d)$. Then $V=V_1\oplus V_0$ and $W=W_1\oplus W_0$
because $\pi_1c|_{V_1}$ and $\pi_1d|_{W_1}$ are isomorphisms onto $Z_1$. Moreover,
\[
a(U_0)\subseteq V_0,
\qquad
b(U_0)\subseteq W_0,
\qquad
c(V_0)\subseteq Z_0,
\qquad
d(W_0)\subseteq Z_0,
\]
since $h(U_0)=0$.
Thus $M=M_1\oplus M_0$, where $M_i=(U_i,V_i,W_i,Z_i)$ for $i=0,1$.
Now $U_1\cong V_1\cong W_1\cong Z_1\cong \K^r$, and after choosing bases, all four maps in $M_1$ become identities. Hence
$M_1\cong F^{\oplus r}$,
where $F$ denotes the full commutative square
\[
F:\qquad
\xymatrix@R=2.6em@C=3.4em{
    {\K}\ar[r]^1\ar[d]_1 & {\K}\ar[d]^1 \\
    {\K}\ar[r]_1 & {\K}
}
\]
Thus every module decomposes as
\[
M\cong F^{\oplus r}\oplus M_0,
\]
where the residual summand $M_0$ satisfies $ca=db=0$.

We now write the residual module as $M=(U,V,W,Z;a,b,c,d)$,
so $ca=db=0$, and decompose it further. Set
$V^{\uparrow}:=a(U)$ and $W^{\uparrow}:=b(U)$.
Since $ca=0$ and $db=0$, we have $c(V^{\uparrow})=0$ and $d(W^{\uparrow})=0$.
Choose a linear subspace $V^{\downarrow}\subseteq V$ complementary to
$V^{\uparrow}=a(U)$, and similarly choose $W^{\downarrow}\subseteq W$
complementary to $W^{\uparrow}=b(U)$. In other words,
$V=V^{\uparrow}\oplus V^{\downarrow}$ and
$W=W^{\uparrow}\oplus W^{\downarrow}$; these are choices of complements,
not canonical subspaces.

Then $M=M^{\uparrow}\oplus M^{\downarrow}$, where
\[
\begin{aligned}
M^{\uparrow}&:=(U,V^{\uparrow},W^{\uparrow},0;a,b,0,0),\\
M^{\downarrow}&:=(0,V^{\downarrow},W^{\downarrow},Z;
0,0,c|_{V^{\downarrow}},d|_{W^{\downarrow}}).
\end{aligned}
\]
Thus $M^{\downarrow}$ has vertex spaces $0$, $V^{\downarrow}$,
$W^{\downarrow}$, and $Z$ at $s$, $x_1$, $x_2$, and $t$, respectively.
Its maps out of $s$ are zero, and its maps into $t$ are the restrictions of
$c$ and $d$ to the chosen complements. In $M^{\uparrow}$, the maps $a$ and
$b$ are viewed as maps into their images $V^{\uparrow}$ and $W^{\uparrow}$.
So it remains to classify the upper fork $M^{\uparrow}$ and lower fork $M^{\downarrow}$ separately.

We consider the upper fork first. Let $M^{\uparrow}=(U,V,W,0;a,b,0,0)$ be an upper fork.

Consider the linear map $f=(a,b):U\to V\oplus W$.
Let $K=\ker f$. Choose $U=K\oplus U'$. Then the representation splits off a summand $S_s^{\oplus \dim K}$, so it suffices to consider the case where $f$ is injective. Identify $U'$ with its image
$L:=f(U')\subseteq V\oplus W$.
Let $L_1:=L\cap V$ and $L_2:=L\cap W$, and choose a complement $L_3$ such that
$L=L_1\oplus L_2\oplus L_3$.
Since $L_3\cap V=0$ and $L_3\cap W=0$, the projections of $L_3$ to $V$ and $W$ are injective. Denote their images by $V_3\subseteq V$ and $W_3\subseteq W$; then $L_3\cong V_3\cong W_3$.
Choose complements $V=L_1\oplus V_3\oplus V_0$ and $W=L_2\oplus W_3\oplus W_0$.
With respect to these decompositions, the representation splits as
\[
S_s^{\oplus \dim K}
\oplus M_{s1}^{\oplus \dim L_1}
\oplus M_{s2}^{\oplus \dim L_2}
\oplus M_{s12}^{\oplus \dim L_3}
\oplus S_{x_1}^{\oplus \dim V_0}
\oplus S_{x_2}^{\oplus \dim W_0}.
\]
Therefore every representation of the upper fork is a direct sum of copies of
$S_s$, $S_{x_1}$, $S_{x_2}$, $M_{s1}$, $M_{s2}$, and $M_{s12}$.

For the lower fork, let $N=(0,V,W,Z;0,0,c,d)$, with diagram
\[
\xymatrix@R=1.3em@C=1.8em{
    V\ar[dr]^c && W\ar[dl]_d \\
    & Z &
}
\]
Applying the vector-space duality $D=\Hom_{\K}(-,\K)$
gives the representation
\[
\xymatrix@R=1.3em@C=1.8em{
    & Z^*\ar[dl]_{c^*}\ar[dr]^{d^*} & \\
    V^* && W^*
}
\]
of the opposite quiver. Explicitly, $c^*:Z^*\to V^*$ sends
$\lambda$ to $\lambda\circ c$, and similarly $d^*(\lambda)=\lambda\circ d$.
If $f=(f_V,f_W,f_Z):N\to N'$ is a morphism, then
$f_Zc=c'f_V$ and $f_Zd=d'f_W$. Taking duals gives
$c^*f_Z^*=f_V^*(c')^*$ and $d^*f_Z^*=f_W^*(d')^*$,
so the vertexwise dual maps define a morphism $D(f):D(N')\to D(N)$.
Thus $D$ reverses both arrows and morphisms.

Identifying this opposite lower fork with the upper
fork via $Z^*\longleftrightarrow U$, $V^*\longleftrightarrow V$, and
$W^*\longleftrightarrow W$,
the preceding upper-fork classification applies. To justify transferring
the decomposition back, finite-dimensionality ensures that the evaluation
maps $X\to X^{**}$ are isomorphisms. They commute with the arrow maps and
therefore give a natural isomorphism $N\cong D^2(N)$. Hence $D$ is a
contravariant equivalence.
Moreover, $D(P\oplus Q)\cong D(P)\oplus D(Q)$, and the dual of a nonzero
representation is nonzero. Thus a nontrivial decomposition of $N$ gives one
of $D(N)$, and conversely a decomposition $D(N)\cong P\oplus Q$ gives
$N\cong D(P)\oplus D(Q)$. This proves that $D$ preserves and reflects
indecomposability. Applying $D$ to the upper-fork decomposition of $D(N)$
now shows that $N$ is a direct sum of
copies of $S_{x_1}$, $S_{x_2}$, $S_t$, $M_{1t}$, $M_{2t}$, and $M_{12t}$.
More explicitly, under the above identification the upper- and lower-fork
indecomposables correspond as
\[
D(S_s)\longleftrightarrow S_t,
\qquad
D(S_{x_i})\longleftrightarrow S_{x_i},
\qquad
D(M_{si})\longleftrightarrow M_{it}\quad(i=1,2),
\qquad
D(M_{s12})\longleftrightarrow M_{12t}.
\]
Consequently every $A_2$-module is a direct sum of copies of
\[
S_s,
\ S_{x_1},
\ S_{x_2},
\ S_t,
\ M_{s1},
\ M_{s2},
\ M_{1t},
\ M_{2t},
\ M_{s12},
\ M_{12t},
\ F.
\]
Each displayed representation has one-dimensional nonzero vertex spaces
and connected support joined by nonzero arrow maps. Hence its
endomorphisms are precisely scalar multiplications, so it is indecomposable.
Their supports are distinct, so they are pairwise non-isomorphic. Therefore
these are exactly the eleven indecomposables, and $A_2$ is
representation-finite.
\end{proof}

\subsection{\texorpdfstring{The case \(m=3\)}{The case m=3}}
\label{sec:m3-tits-canonical-222}

Let $\cA=\{H_1,H_2,H_3\}$
be a central essential arrangement of three distinct lines in $\R^2$, and let
$T_{\cA}$ denote its Tits algebra. The point of this case is that the
bound-quiver presentation in \cref{lem:saliola_bound_quiver} is exactly the classical
three-arm canonical algebra of type $(2,2,2)$.

Historically, Ringel introduced these three-arm algebras
in his study of tame algebras and integral quadratic forms
\citep{Ringel1984}; later, Geigle--Lenzing placed these algebras in the general
framework of weighted projective lines and canonical algebras
\citep{GeigleLenzing1987}.

\begin{definition}[Ringel's three-arm canonical algebra]
\label{def:ringel_three_arm}
Let $p,q,r\geq 2$.  Following Ringel's presentation
\citep{Ringel1984}, let $\Delta(p,q,r)$ be the quiver consisting of
three paths with a common source $s$ and a common sink $t$:
\[
\xymatrix@R=1.4em@C=2.2em{
    & x_{1,1}\ar[r] & {\cdots}\ar[r] & x_{1,p-1}\ar[dr] & \\
    s\ar[ur]\ar[r]\ar[dr] & x_{2,1}\ar[r] & {\cdots}\ar[r]
        & x_{2,q-1}\ar[r] & t \\
    & x_{3,1}\ar[r] & {\cdots}\ar[r] & x_{3,r-1}\ar[ur] &
}
\]
Thus the three paths have lengths $p,q,r$, respectively, and are otherwise
disjoint. When an arm has length two, its first and last interior vertices
coincide, so that arm has a single interior vertex.  Denote the corresponding paths from $s$ to $t$ by
$\omega^{(1)},\omega^{(2)},\omega^{(3)}$.  The three-arm canonical algebra is
\[
  \Lambda(p,q,r)
  :=
  \K\Delta(p,q,r)\big/
  \big\langle\omega^{(1)}+\omega^{(2)}+\omega^{(3)}\big\rangle.
\]
We use $\Delta(p,q,r)$ for the quiver and $\Lambda(p,q,r)$ for the
resulting algebra throughout. The displayed relation is a normalized
form of the generic relation among the three paths.
\end{definition}

\begin{theorem}\label{thm:rank_two_canonical}
Let $\cA$ be a central essential arrangement of three distinct lines in
$\R^2$.  Then
\[
    T_{\cA} \cong \Lambda(2,2,2).
\]
\end{theorem}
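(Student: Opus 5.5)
We would deduce the isomorphism directly from \cref{lem:saliola_bound_quiver}, by identifying the bound quiver $(Q_{\cA},I_{\cA})$ with $(\Delta(2,2,2),\langle\omega^{(1)}+\omega^{(2)}+\omega^{(3)}\rangle)$.

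First we describe the intersection lattice. Since $\cA$ consists of three distinct lines through the origin in $\R^2$ and is essential, $L(\cA)=\{\R^2,H_1,H_2,H_3,\{0\}\}$. The lines are pairwise distinct and one-dimensional, so any two meet only in $\{0\}$. Dimensions are $2,1,1,1,0$. By the recipe of \cref{lem:saliola_bound_quiver}, the arrows are $\R^2\to H_i$ and $H_i\to\{0\}$ for $i=1,2,3$, and there are no others because the dimension must drop by exactly one. This is precisely the quiver $Q_3$ displayed at the start of this section, with $s=\R^2$, $x_i=H_i$, $t=\{0\}$.

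Next we identify the relations. The only pair $Z\subsetneq X$ with $\dim X-\dim Z=2$ is $X=\R^2$, $Z=\{0\}$. The intermediate elements $Y$ with $\{0\}\subsetneq Y\subsetneq\R^2$ are exactly $H_1,H_2,H_3$. Hence the ideal is generated by the single element $\rho=\beta_1\alpha_1+\beta_2\alpha_2+\beta_3\alpha_3$, so $T_{\cA}\cong \K Q_3/\langle\rho\rangle=A_3$.

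Finally we match this with $\Lambda(2,2,2)$. In $\Delta(2,2,2)$ each arm has length two and therefore a single interior vertex $x_{i,1}$. So $\Delta(2,2,2)$ has vertices $s,x_{1,1},x_{2,1},x_{3,1},t$ and arrows $s\to x_{i,1}\to t$, and the map $x_i\mapsto x_{i,1}$ on vertices together with the corresponding arrows gives a quiver isomorphism $Q_3\cong\Delta(2,2,2)$. Under it, $\beta_i\alpha_i\mapsto\omega^{(i)}$, so the generator $\rho$ maps to $\omega^{(1)}+\omega^{(2)}+\omega^{(3)}$. The induced isomorphism of path algebras therefore carries one ideal onto the other, which gives $T_{\cA}\cong\Lambda(2,2,2)$.

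There is no genuine obstacle here. The only points needing care are the following. Each arm of length two has a single interior vertex, as stated in \cref{def:ringel_three_arm}. The path-composition convention must be checked: $\beta_i\alpha_i$ denotes the path $s\to x_i\to t$, and it matches $\omega^{(i)}$. Neither argument requires rescaling arrows, since both relations already have all coefficients equal to $1$.
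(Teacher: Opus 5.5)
Your proposal is correct and follows essentially the same route as the paper's proof. You read off the five-element intersection lattice, obtain the three-arm quiver from Saliola's presentation, observe that the single length-two interval $[\{0\},\R^2]$ gives the relation $\beta_1\alpha_1+\beta_2\alpha_2+\beta_3\alpha_3$, and match this with $\omega^{(1)}+\omega^{(2)}+\omega^{(3)}$ in $\Lambda(2,2,2)$.
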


\begin{proof}
We first identify the bound quiver algebra underlying $T_{\cA}$. Since
$\cA$ consists of three distinct lines through the origin and is
essential, its intersection lattice consists of $\R^2$, $H_1$, $H_2$, $H_3$,
and $\{0\}$. Recall the discussion in
\cref{subsec:Saliola}. There is no element of $L(\cA)$ strictly
between $\R^2$ and $H_i$, or between $H_i$ and $\{0\}$.  Hence the cover
relations in the inclusion order are $\R^2\gtrdot H_i\gtrdot\{0\}$ for $i=1,2,3$.
Thus the Hasse diagram of $L(\cA)^{\mathrm{op}}$ is a three-arm diagram. With the
orientation convention used for Tits algebras, this gives the quiver
\[
\xymatrix@R=1.8em@C=2.3em{
    & s\ar[dl]_{\alpha_1}\ar[d]^{\alpha_2}\ar[dr]^{\alpha_3} & \\
    x_1\ar[dr]_{\beta_1} & x_2\ar[d]_{\beta_2} & x_3\ar[dl]^{\beta_3} \\
    & t &
}
\]
where the vertices $s,x_i,t$ correspond respectively to
$\R^2,H_i,\{0\}$.

By \cref{lem:saliola_bound_quiver}, the relations are obtained from the intervals of
length two in the intersection lattice: for each such interval, one imposes the
sum of all length-two paths in that interval. In the displayed opposite order
there is only one interval of length two, namely $[\{0\},\R^2]$.
Its three saturated chains $\R^2\gtrdot H_i\gtrdot\{0\}$, for $i=1,2,3$,
correspond exactly to the three paths $\omega^{(i)}=\beta_i\alpha_i$
from the common source to the common sink.  Therefore
\[
    T_{\cA}
    \cong \K Q/\langle\beta_1\alpha_1+\beta_2\alpha_2+\beta_3\alpha_3\rangle
    =\K Q/\langle\omega^{(1)}+\omega^{(2)}+\omega^{(3)}\rangle.
\]
This is precisely the canonical algebra of type $(2,2,2)$: it has
three arms, each of length $2$, and one linear relation among the three maximal
paths from the source to the sink. Hence $T_{\cA}\cong\Lambda(2,2,2)$.
\end{proof}

To determine the representation type, we use Ringel's trichotomy for the
classical algebras $\Lambda(p,q,r)$~\citep{Ringel1984}.

\begin{theorem}[Ringel's trichotomy]\label{thm:ringel_trichotomy}
For the classical algebra $\Lambda(p,q,r)$, the following hold:
\begin{enumerate}
\item If $\frac1p+\frac1q+\frac1r>1$, then $\Lambda(p,q,r)$ is a
\emph{tame concealed algebra} and, in particular, \emph{domestic tame};
\item If $\frac1p+\frac1q+\frac1r=1$, then $\Lambda(p,q,r)$ is \emph{tubular};
\item If $\frac1p+\frac1q+\frac1r<1$, then it is \emph{wild}.
\end{enumerate}
\end{theorem}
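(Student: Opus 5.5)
Since this is the classical trichotomy of Ringel~\citep{Ringel1984}, we will mostly organize and cite the standard theory rather than reprove it. One part, however, can be derived directly from tools in this paper: the wild direction. The key observation is that deleting the sink $t$ from $\Delta(p,q,r)$ removes the only relation. Indeed, the relation $\omega^{(1)}+\omega^{(2)}+\omega^{(3)}$ involves paths ending at $t$. Hence
\[
\Lambda(p,q,r)/\Lambda(p,q,r)e_t\Lambda(p,q,r)\cong \K T,
\]
where $T$ is the star quiver with centre $s$ and three arms having $p-1$, $q-1$, $r-1$ further vertices. This is exactly the graph $T_{p,q,r}$, with its arms oriented away from the centre. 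By the Gabriel/Dlab--Ringel/Kac classification of hereditary algebras, $\K T_{p,q,r}$ is wild precisely when $\frac1p+\frac1q+\frac1r<1$. Part (3) then follows from \cref{lem:quotient_inheritance}(1). (Deleting $s$ instead gives the dual star and yields the same conclusion.)

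For parts (1) and (2), the plan is to pass through Ringel's tilting/derived-equivalence description. Alternatively, one can use the Geigle--Lenzing interpretation~\citep{GeigleLenzing1987}, in which $\Lambda(p,q,r)$ is the endomorphism algebra of a tilting bundle on the weighted projective line $\mathbb X(p,q,r)$. That line has orbifold Euler characteristic
\[
\chi=\tfrac1p+\tfrac1q+\tfrac1r-1 .
\]

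In case (1), $\chi>0$, so the triple is one of $(2,2,r)$, $(2,3,3)$, $(2,3,4)$, $(2,3,5)$. Here $\operatorname{coh}\mathbb X$ is derived equivalent to the tame hereditary algebra $\K\widetilde\Delta$ of extended Dynkin type $\widetilde D_{r+2}$, $\widetilde E_6$, $\widetilde E_7$, $\widetilde E_8$ respectively. The plan is to exhibit a preprojective tilting module over $\K\widetilde\Delta$ with endomorphism algebra $\Lambda(p,q,r)$. Concretely, the projective-injective summands of $\Lambda$ (the "line bundles" $\mathcal O$ and $\mathcal O(\vec c)$) together with the arm simples should correspond to a slice in the preprojective component. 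This shows $\Lambda$ is concealed of type $\widetilde\Delta$. Tame concealed algebras inherit the module category structure of $\K\widetilde\Delta$: a preprojective component, a preinjective component, and a single $\mathbb P^1$-family of stable tubes. They are therefore domestic tame. As a consistency check, the Tits form of $\Lambda$, read off from the quiver with one relation, should be positive semidefinite with one-dimensional radical spanned by the vector that is $1$ at $s,t$ and suitably $2$ along the arms.

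In case (2), $\chi=0$, so the triple is $(2,3,6)$, $(2,4,4)$ or $(3,3,3)$. We would invoke Ringel's definition of tubular algebra: a tubular extension of a tame concealed algebra of tubular type. For each of these three triples we would verify directly that $\Lambda(p,q,r)$ is such an extension. Deleting $t$ or $s$ leaves a concealed algebra of extended Dynkin type $\widetilde E_8$, $\widetilde E_7$ or $\widetilde E_6$. The remaining vertex is a one-point extension by a module lying in a tube, and the resulting tubular type is $(p,q,r)$. Ringel's tubular theory then gives that $\Lambda$ is tubular, hence tame of non-domestic polynomial growth.

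The main obstacle is the tame direction, especially the tubular case. Identifying the correct tilting module or tube-extension data requires Ringel's full machinery, and the wild embedding argument above cannot be reused there. I would not attempt to reprove this part; I would cite \citep{Ringel1984} for (1) and (2), giving the reductions above only as orientation. For the paper's purposes only case (1) with $(p,q,r)=(2,2,2)$ is needed, since $\frac12+\frac12+\frac12>1$. In that case $\widetilde\Delta=\widetilde D_4$, and the concealed structure can be checked by hand from the four-subspace-like shape of $\Delta(2,2,2)$.
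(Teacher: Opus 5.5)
Your proposal is correct. For parts (1) and (2) it agrees with the paper, which gives no argument of its own for this theorem and simply cites \citep{Ringel1984}. In fact the paper only uses case (1) with $(p,q,r)=(2,2,2)$.

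For part (3) you take a genuinely different, self-contained route. The single relation consists of paths ending at the sink, so $\Lambda(p,q,r)/\Lambda(p,q,r)e_t\Lambda(p,q,r)\cong\K T_{p,q,r}$. The tree $T_{p,q,r}$ is neither Dynkin nor Euclidean exactly when $\frac1p+\frac1q+\frac1r<1$. Then \cref{lem:quotient_inheritance} lifts the wildness of this hereditary quotient to $\Lambda(p,q,r)$.

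This is the same quotient technique the paper uses for arrangements in \cref{ch:dimension_three,ch:dimension_four}. It makes (3) depend only on the classical hereditary classification rather than on the theory of canonical algebras. Its limitation is the one you identify. For $\frac1p+\frac1q+\frac1r\geq1$ the quotient is Dynkin or Euclidean and says nothing about $\Lambda(p,q,r)$ itself. So the structural conclusions in (1) and (2) (concealed, domestic, tubular) still have to come from Ringel's tilting and tubular-extension theory, which the citation supplies uniformly for all three cases.

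Some of your orientation remarks for (1) are inaccurate, although nothing in your argument rests on them.
\begin{enumerate}
\item The radical of the Tits form $q_\Lambda$ of $\Lambda(p,q,r)$ is spanned by the all-ones vector $\mathbf 1$. There are $p+q+r-1$ vertices, $p+q+r$ arrows and one relation, so $q_\Lambda(\mathbf 1)=0$, and every partial derivative vanishes at $\mathbf 1$. This is the dimension vector of the modules at the mouths of the homogeneous tubes. A vector with entries $2$ on the arms is not even isotropic: it gives $q_\Lambda=3$ for $(2,2,2)$.
\item $\Lambda(p,q,r)$ has no projective-injective indecomposables. $P(s)$ has socle $S(t)^2$. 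Every other indecomposable projective has simple socle $S(t)$ but differs from $I(t)$, which is $2$-dimensional at $s$.
\item The canonical tilting bundle is $\bigoplus_{0\le\vec x\le\vec c}\mathcal O(\vec x)$. So the arm vertices also correspond to line bundles $\mathcal O(j\vec x_i)$ with $0<j\vec x_i<\vec c$, not to simple objects.
\item The summands of a preprojective tilting module realizing $\Lambda(p,q,r)$ as concealed cannot form a slice. A slice module over a hereditary algebra has hereditary endomorphism algebra, whereas $\Lambda(p,q,r)$ has a relation.
\end{enumerate}
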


We can now determine the representation type.
\begin{theorem}\label{cor:rank_two_tame}
For a central essential arrangement $\cA$ of three distinct lines in $\R^2$,
the Tits algebra $T_{\cA}$ is of tame representation type.
\end{theorem}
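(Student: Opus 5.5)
The plan is to combine \cref{thm:rank_two_canonical} with \cref{thm:ringel_trichotomy}. By \cref{thm:rank_two_canonical}, $T_{\cA}\cong\Lambda(2,2,2)$. For $(p,q,r)=(2,2,2)$,
\[
    \tfrac12+\tfrac12+\tfrac12=\tfrac32>1,
\]
so part (1) of \cref{thm:ringel_trichotomy} shows that $\Lambda(2,2,2)$ is tame concealed and, in particular, domestic tame. Representation type is invariant under algebra isomorphism, so $T_{\cA}$ is tame. This argument does not depend on the characteristic of $\K$, since the presentation in \cref{lem:saliola_bound_quiver} and the identification in \cref{thm:rank_two_canonical} hold over any field.

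The claim is tame, not merely ``not wild'', so I will also check that $T_{\cA}$ is not representation-finite. I will do this by exhibiting a one-parameter family of indecomposables. Set $\K^2$ at $s$, $\K$ at each $x_i$, and $\K^2$ at $t$, which gives dimension vector $(2;1,1,1;2)$. Take $\alpha_1,\alpha_2,\alpha_3$ to be the linear forms $(1,0)$, $(0,1)$ and $(\lambda,1)$ on $\K^2$. Then choose $\beta_1,\beta_2,\beta_3:\K\to\K^2$ so that $\sum\beta_i\alpha_i=0$; one can take $\beta_i$ to be suitable columns determined by the relation, for instance so that the composite forms the identity matrix split in a fixed way. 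Checking that these modules are pairwise non-isomorphic bricks for distinct $\lambda$ is a short computation.

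A cleaner route, and the one I would prefer, is to note that $\Lambda(2,2,2)$ is tame concealed of Euclidean type $\tilde D_4$. Such algebras have infinitely many indecomposables, lying in tubes, and this is a standard consequence of Ringel's theory cited above. I would cite this rather than compute.

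The main point to get right is the convention. One must make sure that Ringel's normalized relation $\omega^{(1)}+\omega^{(2)}+\omega^{(3)}$ matches the classical canonical algebra up to isomorphism. Rescaling arrows turns any relation $c_1\omega^{(1)}+c_2\omega^{(2)}+c_3\omega^{(3)}$ with all $c_i\neq0$ into this form, and in the classical form one of the paths is written as a combination of the other two. With that checked, the statement follows directly from the two cited theorems.
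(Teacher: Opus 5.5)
Your main argument is the paper's proof: \cref{thm:rank_two_canonical} gives $T_{\cA}\cong\Lambda(2,2,2)$, and part (1) of \cref{thm:ringel_trichotomy} applies since $\tfrac12+\tfrac12+\tfrac12>1$. Drop the optional explicit family, which is not needed and does not work: for dimension vector $(2;1,1,1;2)$ the relation $\sum_i\beta_i\alpha_i=0$ forces $\operatorname{rank}(\alpha_1,\alpha_2,\alpha_3)+\operatorname{rank}(\beta_1,\beta_2,\beta_3)\leq3$, so a copy of $S_s$ or $S_t$ always splits off and no module with this dimension vector is indecomposable, while representation-infiniteness is already part of being tame concealed (and is also witnessed by the bricks $F_3(M_\lambda)$ of dimension vector $(1;1,1,1;1)$ in \cref{thm:rank_two_schurian}).
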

\begin{proof}
Since we already know that $T_{\cA}\cong \Lambda(2,2,2)$, we can apply Ringel's trichotomy.
In this case, $\frac12+\frac12+\frac12=\frac32>1$,
so $T_{\cA}$ is a tame concealed algebra and in particular has tame
representation type.
\end{proof}

\subsection{\texorpdfstring{The case \(m\geq4\)}{The case m >= 4}}

For this case, we embed the module category of the free algebra
$\K\langle x,y\rangle$ into $A_m\text{-mod}$ to show that $A_m$ is of wild representation type for $m\geq4$.

\begin{theorem}\label{th:rank_two_wild_embedding}
For every $m\ge 4$ there is a full exact embedding
\[
G:\K\langle x,y\rangle\text{-mod}\hookrightarrow A_m\text{-mod}.
\]
Consequently, $A_m$ is strictly wild for every $m\ge 4$.
\end{theorem}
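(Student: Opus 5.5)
The plan is to reduce to $m=4$ and then write down an explicit functor. For the reduction, fix $m\geq4$ and let $e$ be the sum of the idempotents $e_{x_i}$ for $5\leq i\leq m$. In $A_m/A_meA_m$ the arrows $\alpha_i,\beta_i$ with $i\geq5$ vanish. The single relation $\sum_{i=1}^m\beta_i\alpha_i$ therefore becomes $\sum_{i=1}^4\beta_i\alpha_i$, so $A_m/A_meA_m\cong A_4$. Inflation along this quotient is exact and fully faithful, as in \cref{lem:quotient_inheritance}. Composing it with a full exact embedding into $A_4\text{-mod}$ gives one into $A_m\text{-mod}$, so it suffices to construct $G$ for $m=4$.

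For $m=4$, I would define $G$ on a $\K\langle x,y\rangle$-module, viewed as a triple $(X;x,y)$ with $X$ finite-dimensional, by placing $X$ at all six vertices $s,x_1,\dots,x_4,t$. The arrow maps are
\[
\alpha_i=\id_X\ (1\leq i\leq4),\qquad \beta_1=\id_X,\quad \beta_2=x,\quad \beta_3=y,\quad \beta_4=-(\id_X+x+y).
\]
The relation holds because $\sum_i\beta_i\alpha_i=\id+x+y-(\id+x+y)=0$. On a morphism $\varphi:(X;x,y)\to(X';x',y')$, that is, a linear map with $\varphi x=x'\varphi$ and $\varphi y=y'\varphi$, I let $G(\varphi)$ be $\varphi$ at every vertex. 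The commutativity squares for all arrows then hold, including the one for $\beta_4$ by linearity. Thus $G$ is a $\K$-linear functor, and it is exact and faithful because it is computed vertexwise by the identity functor on vector spaces.

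The key step is fullness. Take a morphism $f=(f_s,f_{x_1},\dots,f_{x_4},f_t):G(X)\to G(X')$. The squares for $\alpha_i=\id$ force $f_{x_i}=f_s$ for all $i$. The square for $\beta_1=\id$ forces $f_t=f_{x_1}=f_s$. Write $\varphi:=f_s$. The squares for $\beta_2$ and $\beta_3$ then read $\varphi x=x'\varphi$ and $\varphi y=y'\varphi$, so $\varphi$ is a module homomorphism and $f=G(\varphi)$. The square for $\beta_4$ then holds automatically. Hence $G$ is fully faithful, so it preserves indecomposability and reflects isomorphism classes.

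I do not expect a serious obstacle; the only design point is the choice of the arrow maps. Three arms already use up the relation on one parameter, which is consistent with the tameness of $\Lambda(2,2,2)$. The fourth arm is exactly what frees a second independent endomorphism $y$, and the arm $\beta_1=\id$ rigidifies the identification of all vertex maps. Strict wildness then follows directly from the definition, since $G$ is a full exact embedding of $\K\langle x,y\rangle\text{-mod}$.
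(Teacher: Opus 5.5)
Your proposal is correct and is essentially the paper's proof: the same functor placing the module at every vertex with all $\alpha_i=\id$, one $\beta$ equal to $\id$ to force $f_t=f_s$, two $\beta$'s equal to $x$ and $y$, and a fourth $\beta=-(\id+x+y)$ absorbing the relation, with the same fullness argument. The only difference is cosmetic: you handle $m\geq5$ through the idempotent quotient $A_m/A_meA_m\cong A_4$ and inflation, so the extra arms carry zero spaces, whereas the paper keeps the module on the extra arms with $\alpha_i=\id$ and $\beta_i=0$; both work.
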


\begin{proof}
Let $(M,X,Y)$ be a finite-dimensional $\K\langle x,y\rangle$-module. Define
an $A_m$-module $G(M,X,Y)$ by taking $V_s=V_{x_1}=\cdots=V_{x_m}=V_t=M$,
with arrow maps $\alpha_i=\id_M$ for $1\le i\le m$ and
\[
\begin{gathered}
\beta_1=X,\qquad\beta_2=Y,\qquad\beta_3=\id_M,\\
\beta_4=-(X+Y+\id_M),\qquad\beta_i=0\quad(i\ge5).
\end{gathered}
\]
Then
\[
\sum_{i=1}^m \beta_i\alpha_i
= X+Y+\id_M-(X+Y+\id_M)+0+\cdots+0=0,
\]
so the defining relation of $A_m$ is satisfied. Therefore $G(M,X,Y)$ is indeed an $A_m$-module.

Now let $T:(M,X,Y)\to(M',X',Y')$ be a morphism of
$\K\langle x,y\rangle$-modules, so $TX=X'T$ and $TY=Y'T$.
Assign the same linear map $T$ to every vertex of $Q_m$; then all squares commute, so we get a morphism $G(M,X,Y)\to G(M',X',Y')$.

Conversely, let
\[
\varphi=(f_s,f_1,\dots,f_m,f_t):G(M,X,Y)\to G(M',X',Y')
\]
be an $A_m$-morphism. Since all $\alpha_i$ are identities, commutativity at
these arrows implies $f_1=\cdots=f_m=f_s$.
Since $\beta_3=\id$, commutativity at $\beta_3$ implies $f_t=f_s$.
Finally, commutativity at $\beta_1$ and $\beta_2$ becomes
$f_sX=X'f_s$ and $f_sY=Y'f_s$.
Thus every $A_m$-morphism comes from a unique
$\K\langle x,y\rangle$-morphism. Hence $G$ is full and faithful, while
exactness is checked vertexwise. In particular, $G$ preserves
indecomposability and isomorphism classes, so $A_m$ is strictly wild.
\end{proof}

\subsection{Schurian finiteness}\label{sec:rank_two_schurian}

We can now determine the Schurian finiteness of the rank-two Tits algebras.

\begin{theorem}\label{thm:rank_two_schurian}
The rank-two Tits algebra $A_m$ is Schurian-finite if and only if $m=2$.
\end{theorem}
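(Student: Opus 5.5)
The proof splits into the cases $m=2$, $m=3$, and $m\geq4$.

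\emph{The case $m=2$.} By \cref{thm:rank_two_finite}, $A_2$ has only eleven indecomposable modules up to isomorphism. Every Schurian module is indecomposable, so there are at most eleven Schurian modules, and $A_2$ is Schurian-finite.

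\emph{The case $m\geq4$.} Here \cref{th:rank_two_wild_embedding} gives a full exact embedding $G$ of $\K\langle x,y\rangle$-mod into $A_m$-mod. Since $G$ is fully faithful, it preserves endomorphism algebras and reflects isomorphism. It therefore suffices to find infinitely many pairwise non-isomorphic one-dimensional $\K\langle x,y\rangle$-modules. Take $M=\K$ with $X=\lambda$ and $Y=0$ for $\lambda\in\K$. These modules have endomorphism algebra $\K$, and for distinct $\lambda$ they are pairwise non-isomorphic. Since $\K$ is algebraically closed, it is infinite, so $G$ sends this family to infinitely many Schurian $A_m$-modules.

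\emph{The case $m=3$.} This case needs an explicit family, and it is the only step requiring work. For $\lambda\in\K\setminus\{0,-1\}$, let $N_\lambda$ be the representation with
\[
V_s=\K^2,\qquad V_{x_i}=\K,\qquad V_t=\K,
\]
and arrow maps
\[
\alpha_1=(1,0),\quad \alpha_2=(0,1),\quad \alpha_3=(1,\lambda),\qquad
\beta_1=1,\quad \beta_2=\lambda,\quad \beta_3=-1.
\]
The defining relation holds because
\[
\beta_1\alpha_1+\beta_2\alpha_2+\beta_3\alpha_3=(1,0)+(0,\lambda)-(1,\lambda)=0.
\]

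To see that $\End(N_\lambda)=\K$, let $(f_s,f_1,f_2,f_3,f_t)$ be an endomorphism, so $f_i=c_i$ and $f_t=c_t$ are scalars.
\begin{enumerate}
\item Since $\beta_1,\beta_2,\beta_3$ are all nonzero, commutativity at the $\beta$-arrows forces $c_1=c_2=c_3=c_t$.
\item Commutativity at $\alpha_1$ and $\alpha_2$ says the two coordinate functionals are eigenvectors of $f_s^{\mathsf T}$ with eigenvalue $c_1$. Hence $f_s=c_1\id$.
\end{enumerate}

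To see that the $N_\lambda$ are pairwise non-isomorphic, argue the same way. An isomorphism $N_\lambda\to N_\mu$ is forced into the same scalar shape. Matching the $\alpha_3$ and $\beta_2$ data then gives $\lambda=\mu$. Alternatively, note that the three lines $\ker\alpha_i\subseteq V_s$ must be carried to the corresponding lines for $N_\mu$, and the scalars on the $\beta$-arrows then pin down $\lambda$.

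The only real obstacle is this $m=3$ verification, where the endomorphism and isomorphism checks must be done carefully. If the family above degenerates, I would instead invoke \cref{cor:rank_two_tame}: a tame concealed algebra has infinitely many homogeneous tubes, and their quasi-simple modules are bricks. This also yields infinitely many Schurian modules.

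Combining the three cases, $A_m$ is Schurian-finite exactly when $m=2$.
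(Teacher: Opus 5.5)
Your cases $m=2$ and $m\geq4$ are correct and agree with the paper; the paper uses the one-dimensional modules $(\K,\id,\lambda\id)$ rather than $(\K,\lambda,0)$, which makes no difference. The case $m=3$, however, breaks down: the modules $N_\lambda$ are all isomorphic to each other. For $\lambda,\mu\in\K\setminus\{0,-1\}$ put
\[
f_s=\operatorname{diag}(1,\lambda/\mu),\qquad f_{x_1}=f_{x_3}=f_t=1,\qquad f_{x_2}=\lambda/\mu .
\]
The $\alpha_1$- and $\alpha_2$-squares commute trivially. The equation $(1,\mu)f_s=(1,\lambda)$ is the $\alpha_3$-square, and $\mu\cdot\lambda/\mu=\lambda$ is the $\beta_2$-square. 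The $\beta_1$- and $\beta_3$-squares are identities. Hence $N_\lambda\cong N_\mu$.

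The error is the claim that an isomorphism is ``forced into the same scalar shape''. For an endomorphism, the $\beta_2$-square reads $c_t\lambda=\lambda c_2$ and does give $c_2=c_t$. For a map $N_\lambda\to N_\mu$ it reads $c_t\lambda=\mu c_2$, which only determines $c_2$. The $\alpha_3$-square yields exactly the same equation, so nothing constrains $\lambda$. Your alternative argument fails for the same reason: three distinct lines in $\K^2$ have no projective invariant (a cross-ratio needs four), and the scalars at $x_1,x_2,x_3$ absorb the $\beta$-scalars. So the $N_\lambda$ are bricks, but they form a single isomorphism class.

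The proof for $m=3$ therefore rests entirely on your fallback. That fallback is sound, but it imports more than the paper provides. \cref{thm:ringel_trichotomy} only says that $\Lambda(2,2,2)$ is tame concealed. You would additionally need three facts: its tubular family is standard; it is indexed by $\mathbb{P}^1(\K)$ with only three non-homogeneous tubes; and quasi-simple modules of standard tubes are bricks.

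The paper avoids all of this with an elementary full exact embedding $F_3$ of Kronecker representations into $A_3$-mod. It puts $U$ at $s,x_3$ and $V$ at $x_1,x_2,t$, with $\alpha_1=X$, $\alpha_2=Y$, $\alpha_3=\id$, $\beta_1=\beta_2=\id$, $\beta_3=-(X+Y)$, and this transports the Kronecker bricks $M_\lambda$.

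The direct repair of your own construction is to drop to dimension vector $(1,1,1,1,1)$. Take every $\alpha_i=1$ and $\beta_1=1$, $\beta_2=\lambda$, $\beta_3=-(1+\lambda)$. The identity arrows $\alpha_1,\alpha_2,\alpha_3,\beta_1$ connect all five vertices, so any morphism between two such modules is a single scalar $c$. The $\beta_2$-square then gives $c\lambda=c\mu$. This yields both $\End\cong\K$ and pairwise non-isomorphism for all $\lambda\in\K$.
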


\begin{proof}
By \cref{thm:rank_two_finite}, the algebra $A_2$ is representation-finite,
so it is Schurian-finite.

Suppose first that $m=3$. Let
\[
    \mathsf K_2:\qquad
    \xymatrix@C=3.4em{
        1\ar@<0.55ex>[r]^x\ar@<-0.55ex>[r]_y & 2
    }
\]
be the $2$-Kronecker quiver. Write one of its finite-dimensional
representations as
\[
    M=\left(
    \xymatrix@C=3.4em{
        U\ar@<0.55ex>[r]^X\ar@<-0.55ex>[r]_Y & V
    }
    \right).
\]
Define an $A_3$-module $F_3(M)$ by
\[
\begin{gathered}
    F_3(M)_s=F_3(M)_{x_3}=U,
    \qquad
    F_3(M)_{x_1}=F_3(M)_{x_2}=F_3(M)_t=V,\\
    \alpha_1=X,\quad
    \alpha_2=Y,\quad
    \alpha_3=\id_U,\qquad
    \beta_1=\beta_2=\id_V,\quad
    \beta_3=-(X+Y).
\end{gathered}
\]
Then
\[
    \beta_1\alpha_1+\beta_2\alpha_2+\beta_3\alpha_3
    =X+Y-(X+Y)=0,
\]
so $F_3(M)$ satisfies the defining relation of $A_3$.

A morphism $(p,q):M\to M'$ of Kronecker representations satisfies
$qX=X'p$ and $qY=Y'p$.
Assigning $p$ at the vertices $s,x_3$ and $q$ at the vertices
$x_1,x_2,t$ defines an $A_3$-morphism. Conversely, the identity arrows in
$F_3(M)$ force every $A_3$-morphism to have precisely this form, and the two
remaining commutativity conditions are the Kronecker conditions above.
Consequently, $F_3:\operatorname{rep}_{\K}(\mathsf K_2)\hookrightarrow A_3\text{-mod}$
is full, faithful, and exact.

For each $\lambda\in\K$, set
\[
    M_\lambda=
    \left(
    \xymatrix@C=3.4em{
        {\K}\ar@<0.55ex>[r]^1\ar@<-0.55ex>[r]_\lambda & {\K}
    }
    \right).
\]
The modules $M_\lambda$ are pairwise non-isomorphic bricks. Since $\K$ is
infinite and $F_3$ is fully faithful, the modules $F_3(M_\lambda)$ form an
infinite family of pairwise non-isomorphic Schurian $A_3$-modules. Thus $A_3$
is Schurian-infinite.

Finally, let $m\geq4$. Regard $N_\lambda=(\K,\id_{\K},\lambda\id_{\K})$, for $\lambda\in\K$,
as a one-dimensional $\K\langle x,y\rangle$-module. These modules are
pairwise non-isomorphic bricks. The fully faithful functor $G$ from
\cref{th:rank_two_wild_embedding} preserves their endomorphism algebras and
isomorphism classes. Hence the $A_m$-modules $G(N_\lambda)$ give infinitely
many pairwise non-isomorphic bricks, and $A_m$ is Schurian-infinite.
\end{proof}
\section{\texorpdfstring{The $\R^3$ Case}{The R3 Case}}
\label{ch:dimension_three}

We retain the notation and conventions of \cref{ch:basic_results}.
In dimension three, we first treat three planes and then prove wildness for
four planes. Larger arrangements will be covered by deletion in
\cref{ch:general}.
\subsection{Three planes in \texorpdfstring{$\R^3$}{R3}}

For three distinct hyperplanes in $\R^n$, the intersection is at least $(n-3)$-dimensional. Hence, for $n\ge 4$, the intersection is at least one-dimensional, and the arrangement is not essential. Therefore, we only need to consider the case of three planes in $\R^3$.

\begin{theorem}\label{prop:three_planes_tame}
Let $\cA$ be a central essential arrangement of three distinct planes in
$\R^3$. Then
\(T_{\cA}\cong I(B_3)\),
where $I(B_3)$ is the incidence algebra of the Boolean lattice $B_3$, viewed as
the three-dimensional cube. Consequently, $T_{\cA}$ is of tame representation type.
\end{theorem}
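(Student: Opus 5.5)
First I would identify the intersection lattice. Three distinct planes through the origin in $\R^3$ whose total intersection is $\{0\}$ have linearly independent defining forms, since three forms spanning a space of dimension at most two would leave a common line. So, as recalled in \cref{subsec:Saliola}, the map $S\mapsto\bigcap_{i\in S}H_i$ identifies $B_3$ with $L(\cA)$ in the reverse-inclusion order. Concretely, the elements of $L(\cA)$ are $\R^3$, the three planes $H_i$, the three lines $H_i\cap H_j$, and $\{0\}$, and these eight subspaces are pairwise distinct. By \cref{lem:saliola_bound_quiver}, the quiver $Q_{\cA}$ is then the oriented Hasse diagram of the cube: there are arrows $\R^3\to H_i$, then $H_i\to H_i\cap H_j$ for each $j\neq i$, then $H_i\cap H_j\to\{0\}$.

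Next I would read off the relations. The intervals of length two in $L(\cA)$ come in three kinds. The interval $[\R^3,H_i\cap H_j]$ contains exactly the two middle elements $H_i$ and $H_j$. The interval $[H_i,\{0\}]$ contains exactly the two lines through $H_i$. In both cases $\rho_{X,Z}$ is a sum of two paths, so each of the six square faces of the cube carries one relation of the form $p+q=0$, with $p$ and $q$ the two sides of the square. The incidence algebra $I(B_3)$, by contrast, is $\K Q_{\cA}$ modulo the commutativity relations $p-q=0$ on the six squares. Commutativity of all squares already forces all paths between two fixed vertices to agree, so this gives the full incidence algebra.

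The main obstacle is therefore a sign-change step: I need to rescale arrows by $\pm1$ so that every anticommutative square becomes commutative. Such a rescaling induces an algebra automorphism of $\K Q_{\cA}$ carrying one ideal onto the other. Assign to each arrow $\gamma$ a sign $\epsilon_\gamma$, and replace $\gamma$ by $\epsilon_\gamma\gamma$. This works exactly when, for every square, the products of the signs along its two sides differ by a factor of $-1$. Equivalently, I need a $1$-cochain on the cube whose coboundary is $-1$ on every face.

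I would construct such a cochain explicitly in the spirit of the Koszul sign rule. Write the arrow adding element $k$ to a subset $S$, and give it the sign $(-1)^{|\{i\in S: i<k\}|}$. Then for a square that adds $k<l$ to $S$, one route picks up one extra factor of $-1$ compared with the other, so the square becomes commutative. Alternatively, one can note that the cube is simply connected as a CW complex with these six faces, so $H^2$ vanishes and the required cochain exists. This sign change also works in characteristic $2$, where the two families of relations simply coincide. Hence $T_{\cA}\cong I(B_3)$.

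Finally, for tameness I would invoke Chapoton--Rognerud \citep[Lemma~3.5]{chapotonRognerud2019}, cited in the introduction, which states that $I(B_3)$ is tame. I would also add a short remark that it is not representation-finite, for example via the convex subquiver $\{H_1,H_1\cap H_2,H_1\cap H_3,\{0\}\}\cup\{\ldots\}$, or simply via the citation, since the theorem only claims tame type.
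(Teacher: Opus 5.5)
Your proof is correct, but it reaches $T_{\cA}\cong I(B_3)$ by a different route. The paper does not use Saliola's presentation at this step. It applies the linear isomorphism $v\mapsto(\alpha_1(v),\alpha_2(v),\alpha_3(v))$ to move $\cA$ to the coordinate arrangement. The face semigroup then becomes the direct product $\{+,0,-\}^3$, so $T_{\cA}\cong\K[\{+,0,-\}]^{\otimes3}\cong I(B_1)^{\otimes3}\cong I(B_3)$. Because everything is a tensor product, no signs ever arise, and the mismatch between Saliola's relations $p+q$ and the incidence-algebra relations $p-q$ never has to be addressed.

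You instead stay inside the bound-quiver presentation and remove that mismatch with the Koszul twist $(-1)^{|\{i\in S:i<k\}|}$. This twist does send each of the six square relations to $\pm(p-q)$. Your route needs more input: Saliola's theorem and the standard presentation of $I(B_3)$ by commutative squares. In return, it shows explicitly why Saliola's anticommutative cube presents the same algebra as the commutative one. The paper's route is shorter and independent of Saliola. The tameness step is the same citation in both.

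Two side remarks of yours need correcting, though neither affects the main argument.

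First, the complex formed by the cube's vertices, edges and six square faces is a $2$-sphere, so $H^2(\,\cdot\,;\mathbb{Z}/2)\neq0$; simple connectivity only kills $H^1$. The required cochain exists for a different reason. The obstruction is the sum of the face values over the fundamental cycle, which is $6\equiv0\pmod 2$. Equivalently, the face cochain is a cocycle on the contractible solid cube. Your explicit Koszul-sign construction already settles the point, so you can simply drop the cohomological alternative or restate it this way.

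Second, the vertex set you began to write for representation-infiniteness contains the length-two interval from $H_1$ to $\{0\}$. So \cref{cor:convex-subquiver} does not apply to it. The two middle ranks would work instead: they form a hexagon, as in the proof of \cref{prop:three_planes_shurian_infinite}.
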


\begin{proof}
For each $i=1,2,3$, choose a nonzero linear form
$\alpha_i\in(\R^3)^*$ such that $H_i=\ker(\alpha_i)$.
Essentiality implies that the three
covectors $\alpha_1,\alpha_2,\alpha_3$ form a basis of $(\R^3)^*$. Therefore
the linear isomorphism
\[
    \Phi:\R^3\longrightarrow\R^3,\qquad
    v\longmapsto\bigl(\alpha_1(v),\alpha_2(v),\alpha_3(v)\bigr)
\]
sends $H_i$ to the coordinate hyperplane
\[
    C_i:=\{(x_1,x_2,x_3)\in\R^3:x_i=0\},\qquad i=1,2,3.
\]
Thus it sends $\cA$ to the coordinate arrangement
$\cC_3:=\{C_1,C_2,C_3\}$ and induces an isomorphism of face semigroups, so
$T_{\cA}\cong T_{\cC_3}$,
as $\K$-algebras.

The face sign vectors of $\cC_3$ are all elements of $\{+,0,-\}^3$. Since
the coordinate arrangement is the product of three one-dimensional
arrangements, and the one-dimensional face algebra is the incidence algebra
of the two-element Boolean lattice $B_1$, we obtain
\[
    T_{\cC_3}
    \cong \K[\{+,0,-\}^3]
    \cong \K[\{+,0,-\}]^{\otimes3}
    \cong I(B_1)^{\otimes3}
    \cong I(B_3).
\]
The incidence algebra of the cube $B_3$ is of tame representation type~\citep[Lemma~3.5]{chapotonRognerud2019}. This proves
the theorem.
\end{proof}

We now address Schurian finiteness in the case of three planes.

\begin{theorem}\label{prop:three_planes_shurian_infinite}
Let $\cA$ be a central essential arrangement of three distinct planes in
$\R^3$. Then $T_{\cA}$ is Schurian-infinite.
\end{theorem}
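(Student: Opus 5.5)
By \cref{prop:three_planes_tame} we have $T_{\cA}\cong I(B_3)$. It therefore suffices to exhibit infinitely many pairwise non-isomorphic bricks over the cube algebra. I will do this with \cref{lem:quotient_inheritance}, by finding a quotient of $T_{\cA}$ that is already Schurian-infinite.

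\textbf{Choice of quotient.} Take the quotient by the idempotent at the top vertex $\R^3$, or dually at the bottom vertex $\{0\}$. Consider the vertex set $S=L(\cA)\setminus\{\R^3\}$, which consists of the three planes $H_i$, the three lines $H_i\cap H_j$, and the origin. The algebra $T_{\cA}/T_{\cA}e_{\R^3}T_{\cA}$ is the bound quiver algebra on $S$. Its quiver has arrows $H_i\to H_i\cap H_j$ (each plane maps to two lines) and each line maps to $\{0\}$. There are three length-two intervals $[\{0\},H_i]$, each containing the two lines inside $H_i$. Each gives a relation: the sum of the two length-two paths from $H_i$ to $\{0\}$ vanishes.

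\textbf{Construction of bricks.} On this quotient, for $\lambda\in\K$, define $M_\lambda$ as follows.
\begin{enumerate}
\item Put $\K$ at each of the three lines and at $\{0\}$; put $0$ at the planes.
\end{enumerate}
This loses too much structure, so I use instead the hexagon formed by the planes and lines. It is a crown, that is, an unoriented cycle of length six with alternating orientation. Take $S'$ to consist of the three planes and three lines. This set is convex, and it contains no interval of length two, since the planes and lines differ by one dimension. By \cref{cor:convex-subquiver}, $\K Q_{S'}$-mod embeds fully faithfully and exactly into $T_{\cA}$-mod. Now $Q_{S'}$ is an acyclic orientation of the cycle $\tilde A_5$, which is a Euclidean quiver. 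Its path algebra is tame hereditary and has a $\mathbb P^1$-family of homogeneous band modules of dimension vector $(1,\dots,1)$. Concretely, set all spaces equal to $\K$, all arrows equal to $1$ except one arrow equal to $\lambda\neq0$. These have endomorphism ring $\K$: all spaces are one-dimensional, the support is connected, and nonzero arrows force an endomorphism to be a single scalar. They are pairwise non-isomorphic because the product of the arrow scalars around the cycle, taken with signs according to orientation, is an isomorphism invariant. Applying \cref{cor:convex-subquiver} then yields infinitely many Schurian $T_{\cA}$-modules.

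\textbf{Main obstacle.} The key point to check is that $S'$ really is convex and free of length-two intervals, and that the hexagon is a single cycle. Convexity holds because no element of $L(\cA)$ lies strictly between a plane and a line it contains. No two elements of $S'$ differ in dimension by two. For the cycle, each line $H_i\cap H_j$ lies in exactly two planes, $H_i$ and $H_j$, and each plane contains exactly two of the lines. The incidence graph is therefore a $6$-cycle. The remaining step is the invariant distinguishing the $M_\lambda$: any isomorphism is given by nonzero scalars $c_v$ at the vertices, which preserve the alternating product of arrow values. Hence $M_\lambda\cong M_\mu$ forces $\lambda=\mu$.
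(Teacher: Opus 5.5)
Your argument is correct and is essentially the paper's proof. The paper also takes the convex set of the three planes and three lines and applies \cref{cor:convex-subquiver}. It then uses the modules with $\K$ at every vertex, the identity on five arrows and $\lambda$ on the sixth, and distinguishes them via $c\lambda=\mu c$ (it even allows $\lambda=0$, since the five identity arrows already form a connected spanning path). You should simply delete your abandoned first attempt with the quotient by $e_{\R^3}$, which plays no role in the final argument.
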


\begin{proof}
Let $L_{ij}=H_i\cap H_j$ for $1\leq i<j\leq3$. Essentiality implies
that the three lines $L_{12},L_{13},L_{23}$ are distinct. The set
$S=\{H_1,H_2,H_3,L_{12},L_{13},L_{23}\}$ consists of two consecutive
ranks of the intersection lattice. Thus it is convex and contains no
interval of length two. By \cref{cor:convex-subquiver}, it suffices
to construct infinitely many pairwise non-isomorphic bricks for the path
algebra $B=\K Q_S$ of the following quiver:
\[
\xymatrix@R=2.0em@C=2.7em{
    H_1\ar[r]\ar[d] & L_{12} & H_2\ar[l]\ar[d] \\
    L_{13} & H_3\ar[l]\ar[r] & L_{23}
}
\]
For each $\lambda\in\K$, define $M_\lambda$ by assigning $\K$ to each
vertex, the scalar $\lambda$ to $H_1\to L_{13}$, and the identity map
to the other five arrows. Those five identity arrows form a connected
spanning path, so the commutativity conditions force every endomorphism
to act by the same scalar at all six vertices. Conversely, every such
scalar defines an endomorphism. Hence
$\End_B(M_\lambda)\cong\K$.

If $M_\lambda\cong M_\mu$, the same five arrows force the components of
an isomorphism to be a common nonzero scalar $c$. The remaining arrow
gives $c\lambda=\mu c$, so $\lambda=\mu$. Since $\K$ is infinite,
these modules form an infinite family of pairwise non-isomorphic bricks.
\end{proof}

\subsection{Four planes in \texorpdfstring{$\R^3$}{R3}}

We now investigate the case of four distinct planes.

\begin{theorem}\label{thm:four_planes_r3_wild}
Let $\cA$ be a central essential arrangement of four distinct planes in
$\R^3$. Then $T_{\cA}$ is of wild representation type.
\end{theorem}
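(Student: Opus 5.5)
The plan is to reduce to \cref{cor:convex-subquiver}, as in the proof of \cref{prop:three_planes_shurian_infinite}. Take $S$ to be the middle two ranks of $L(\cA)$: the four planes $H_1,\dots,H_4$ together with all lines that arise as pairwise intersections $H_i\cap H_j$. Two consecutive ranks form a convex set containing no interval of length two. Hence $T_{\cA}$ is wild as soon as the path algebra $\K Q_S$ is wild. The quiver $Q_S$ is bipartite, with arrows from planes to the lines they contain, so it has no oriented cycles. By the classical classification of hereditary algebras (Gabriel; Donovan--Freislich and Nazarova), $\K Q_S$ is wild whenever the underlying graph of $Q_S$ is connected and neither Dynkin nor Euclidean. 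The orientation plays no role here.

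The main work is a case analysis of the possible intersection lattices.

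\emph{Generic case.} Essentiality rules out all four planes containing a common line. Suppose first that no three of the planes share a line. Then the six lines $H_i\cap H_j$ are pairwise distinct, and each lies in exactly two planes. The underlying graph of $Q_S$ is then the subdivision of $K_4$, with $10$ vertices and $12$ edges.

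\emph{Near-pencil case.} Otherwise some three planes, say $H_1,H_2,H_3$, share a line $L$. Then $H_4\not\supseteq L$ by essentiality. I must check that the lines $L_i:=H_i\cap H_4$ for $i=1,2,3$ are distinct from $L$ and from each other. They differ from $L$ because $L\not\subseteq H_4$. They differ from each other because if $L_i=L_j$ for $i\neq j$, then $L_i\subseteq H_i\cap H_j=L$, which is impossible. Only one triple can be concurrent, since two concurrent triples would share two planes and hence the same line, forcing all four planes through it. So $S$ has $4$ planes and $4$ lines. The edges are $L$--$H_i$ and $L_i$--$H_i$, $L_i$--$H_4$ for $i=1,2,3$, giving $8$ vertices and $9$ edges.

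In both cases the graph is connected. Its cycle rank, the number of edges minus vertices plus one, is $3$ in the generic case and $2$ in the near-pencil case. A Dynkin graph is a tree, and a Euclidean graph has cycle rank at most $1$, the only Euclidean graph with a cycle being $\tilde A_n$. So neither graph is Dynkin or Euclidean, and $\K Q_S$ is wild. \Cref{cor:convex-subquiver} then gives wildness of $T_{\cA}$.

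The step I expect to need the most care is the combinatorial classification: showing that exactly these two lattice types occur, and that the listed lines are distinct. I would settle it using only essentiality, via the dimension count that any two distinct planes meet in exactly a line.

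As a fallback, if one prefers to avoid citing the hereditary classification, I would exhibit an explicit wild subquiver in each case. In the near-pencil case, for example, the star at $H_4$ together with the arms through $L$ contains a vertex of degree $3$ lying on two cycles. One could then build a full exact embedding from $\K\langle x,y\rangle\text{-mod}$ by hand, in the style of \cref{th:rank_two_wild_embedding}, using identity maps on a spanning tree and putting $X$ and $Y$ on two non-tree arrows.
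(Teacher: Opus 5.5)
Your proposal is correct and follows essentially the paper's proof: the same convex two-rank set $S$, the same two lattice cases, the same reduction via \cref{cor:convex-subquiver}, and the same appeal to the hereditary trichotomy. The two cases are the subdivided $K_4$ with cyclomatic number $3$, and the $8$-vertex, $9$-edge graph with cyclomatic number $2$. One detail the paper states that you leave implicit is that $L_i\not\subseteq H_j$ for $j\in\{1,2,3\}\setminus\{i\}$. This is needed to confirm the edge count of $9$, and it follows from your own argument, since otherwise $L_i\subseteq H_i\cap H_j=L$.
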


\begin{proof}
Let $\cA=\{H_1,H_2,H_3,H_4\}$ be central and essential in $\R^3$.
Any two distinct planes meet in a line, so the intersection of any three is
either a line or the origin. At most one triple can meet in a line: two
distinct triples share two planes, and both triple-intersection lines would
have to equal the intersection of those two planes. That line would then lie
in all four planes, contradicting essentiality. We therefore have exactly
the two cases below.

In either case, let $S$ consist of the four planes and all their distinct
pairwise intersection lines, and let $Q_S$ be the induced quiver.
Its arrows run from each plane to the lines it contains. These vertices
occupy two consecutive ranks, so $S$ is convex and contains no interval of
length two. By \cref{cor:convex-subquiver}, it remains to show that
$\K Q_S$ is wild.

\medskip
\noindent\textbf{Case 1: every three planes meet only at the origin.}
The six lines $H_i\cap H_j$, with $i<j$, are pairwise distinct. Indeed, an
equality between two of them would give a line contained in at least three
planes. Each line is contained in exactly its two defining planes, since a
third containing plane would give the same contradiction. Thus the
intersection lattice consists of $\R^3$, four planes, six lines, and $\{0\}$.
The underlying unoriented graph of $Q_S$ is the subdivision graph of $K_4$:
the vertex corresponding to $H_i\cap H_j$ subdivides the edge joining
$H_i$ and $H_j$. Therefore
$|V(Q_S)|=10$, $|E(Q_S)|=12$, and $\beta(Q_S)=3$.
This connected graph is neither Dynkin nor extended Dynkin. By the
hereditary trichotomy~\citep{Ringel1984}, $\K Q_S$ is wild, and hence
$T_{\cA}$ is wild.

\medskip
\noindent\textbf{Case 2: three planes share a common line.}
After relabelling, let $L=H_1\cap H_2\cap H_3$ be a line.
The pairwise intersections among $H_1,H_2,H_3$ all equal $L$, since each is
a line containing $L$. Essentiality gives $L\cap H_4=\{0\}$.
For $i=1,2,3$, set $M_i=H_i\cap H_4$; these are lines distinct from $L$.
They are also pairwise distinct: if $M_i=M_j$ for $i\neq j$, that line
would lie in $H_i\cap H_j\cap H_4=L\cap H_4=\{0\}$, a contradiction.
The same argument shows that $M_i$ is not contained in $H_j$ for
$j\in\{1,2,3\}\setminus\{i\}$. Thus $L$ lies in exactly $H_1,H_2,H_3$,
and each $M_i$ lies in exactly $H_i$ and $H_4$.
The vertices of $Q_S$ are consequently $H_1,H_2,H_3,H_4$ and
$L,M_1,M_2,M_3$.
The induced quiver has the arrows
\[
\xymatrix@R=1.8em@C=3.0em{
    & M_1 & H_1\ar[l]\ar[dr] & \\
    H_4\ar[ur]\ar[r]\ar[dr] & M_2 & H_2\ar[l]\ar[r] & L \\
    & M_3 & H_3\ar[l]\ar[ur] &
}
\]
Consequently,
$|V(Q_S)|=8$, $|E(Q_S)|=9$, and $\beta(Q_S)=9-8+1=2$.
This connected graph is again neither Dynkin nor extended Dynkin. Thus
$\K Q_S$, and consequently $T_{\cA}$, is wild. This completes the proof.
\end{proof}
\section{\texorpdfstring{The $\R^4$ Case}{The R4 Case}}
\label{ch:dimension_four}

We continue with four distinct hyperplanes. Their common intersection in
$\R^n$ has dimension at least $n-4$, so essentiality forces $n\leq4$.
The cases of dimensions two and three have been treated in
\cref{ch:rank_two,ch:dimension_three}; it remains to
consider dimension four.

\begin{theorem}\label{thm:four_hyperplanes_r4_wild}
Let $\cA$ be a central essential arrangement of four distinct hyperplanes
in $\R^4$. Then $T_{\cA}$ is of wild representation type.
\end{theorem}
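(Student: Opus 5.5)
Four hyperplanes in $\R^4$ forming an essential arrangement must have linearly independent defining forms, since their intersection is $\{0\}$. So I would first reduce to the coordinate arrangement, exactly as in the proof of \cref{prop:three_planes_tame}. The map $v\mapsto(\alpha_1(v),\dots,\alpha_4(v))$ identifies $L(\cA)$ with the Boolean lattice $B_4$, and \cref{lem:saliola_bound_quiver} then gives $T_{\cA}\cong I(B_4)$. Its quiver is the $4$-cube, with the commutativity-type relations $\rho_{X,Z}$: each length-two interval is a square with two paths, so each relation says the two paths sum to zero, equivalently (after sign changes) they commute. To prove wildness, I would not analyse $I(B_4)$ directly. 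Instead I would locate a convex set $S\subseteq L(\cA)$ containing no interval of length two and apply \cref{cor:convex-subquiver}.

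The natural choice is two consecutive ranks. Take $S$ to consist of the six $2$-dimensional intersections $H_i\cap H_j$ and the four lines $H_i\cap H_j\cap H_k$. Any set lying in two adjacent ranks is convex and contains no interval of length two. Each line $\bigcap_{l\neq k}H_l$ lies in exactly the three planes $H_i\cap H_j$ with $i,j\neq k$. So $Q_S$ is the bipartite incidence graph between $2$-subsets and $3$-subsets of $\{1,2,3,4\}$, with arrows from planes to lines. This graph has $10$ vertices and $12$ edges, and it is connected, since any two $2$-subsets are linked through a common $3$-superset or a chain of them. Its first Betti number is therefore $12-10+1=3$.

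By the hereditary trichotomy~\citep{Ringel1984}, a connected quiver whose underlying graph is neither Dynkin nor extended Dynkin has a wild path algebra. Extended Dynkin graphs have Betti number at most $1$, so $\K Q_S$ is wild. \Cref{cor:convex-subquiver} then shows that $T_{\cA}$ is wild. Alternatively, one could use ranks $1$ and $2$: the four hyperplanes together with the six planes give the subdivision of $K_4$, with the same counts. This is probably the cleanest choice, and it mirrors Case~1 of \cref{thm:four_planes_r3_wild}.

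The main point needing care is the lattice identification. I must check that essentiality forces independence, because four hyperplanes in $\R^4$ with trivial common intersection span the dual space. This gives $L(\cA)\cong B_4$, so all six pairwise intersections are distinct and each contains exactly the expected hyperplanes. Once this is in place, the graph computation is routine.
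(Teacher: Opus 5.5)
Your proposal is correct and is essentially the paper's proof. The paper also gets $L(\cA)\cong B_4$ from the independence of the four covectors, takes two consecutive ranks (ranks one and two, your stated alternative, giving the subdivision of $K_4$ with $\beta(Q_S)=3$), and concludes by the hereditary trichotomy together with \cref{cor:convex-subquiver}. Your primary choice of ranks two and three gives an isomorphic underlying graph, since complementation of subsets of $\{1,2,3,4\}$ permutes the $2$-subsets and swaps $3$-subsets with $1$-subsets, only reversing orientation; the identification $T_{\cA}\cong I(B_4)$ is not needed.
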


\begin{proof}

By \cref{lem:saliola_bound_quiver}, the vertices of
the Gabriel quiver of $T_{\cA}$ are the elements of the intersection lattice
$L(\cA)$, ordered by reverse inclusion, and there is an arrow
\(X\longrightarrow Y\)
whenever $Y$ covers $X$. The defining relations come from intervals of length
two.

Write $\cA=\{H_1,H_2,H_3,H_4\}$, where $H_i=\ker(\alpha_i)$ and
$\alpha_i\in(\R^4)^*$ is a nonzero linear form. By essentiality,
the four covectors span $(\R^4)^*$
and therefore form a basis. Intersections indexed by distinct
subsets of these four hyperplanes are distinct, and hence
$L(\cA)\cong B_4$,
the Boolean lattice of rank four.

Take the vertices in ranks one and two:
\(S=\{H_1,H_2,H_3,H_4\}\cup\{H_i\cap H_j:1\leq i<j\leq4\}\).
Since these are two consecutive ranks, $S$ is convex and contains no interval
of length two. By \cref{cor:convex-subquiver}, it suffices to prove that
the path algebra of the induced quiver $Q_S$ is wild.
For every $i<j$, the induced quiver has the two arrows
\[
\xymatrix@R=1.3em@C=1.8em{
    H_i\ar[dr] && H_j\ar[dl] \\
    & H_i\cap H_j &
}
\]
Its underlying unoriented graph is the subdivision graph of $K_4$. Thus
$|V(Q_S)|=4+6=10$, $|E(Q_S)|=12$, and its cyclomatic number is
$\beta(Q_S)=|E(Q_S)|-|V(Q_S)|+1=3$.
The graph is therefore neither Dynkin nor extended Dynkin. By the hereditary
trichotomy~\citep{Ringel1984}, the path algebra $\K Q_S$ is wild, and hence
$T_{\cA}$ is wild.
\end{proof}
\section{General Case}\label{ch:general}
In this section, we shall prove that any central arrangement can be reduced to central essential arrangements, and moreover, for any central essential arrangement with at least four hyperplanes, the corresponding Tits algebra is wild.

For a central arrangement $\cA$ in $\R^n$, the common intersection
$X=\bigcap_{H\in\cA}H$ consists of directions along which none of the
defining linear forms changes. Translating a point along $X$ therefore
leaves its face sign vector unchanged. This suggests removing those common
directions by passing to $\R^n/X$ and replacing each hyperplane $H$ by $H/X$;
the resulting hyperplanes have zero total intersection. This reduction is
called \emph{essentialization}. The following lemma shows that it preserves
the Tits algebra, so the representation-theoretic classification can be
carried out for essential arrangements.

\begin{lemma}[Essentialization]\label{lem:essentialization}
Let $\cA=\{H_1,\dots,H_m\}$ be a central arrangement in $\R^n$, and set
\[
    X=\bigcap_{i=1}^m H_i,
    \qquad
    r=n-\dim X,
    \qquad
    \overline H_i=H_i/X.
\]
Then
$\cA_{\mathrm{ess}}:=\{\overline H_1,\dots,\overline H_m\}$
is an essential arrangement in $\R^n/X$, which we identify with $\R^r$,
and
\[
    T_{\cA}\cong T_{\cA_{\mathrm{ess}}}.
\]
Consequently, $\cA$ and $\cA_{\mathrm{ess}}$ have the same representation
type and the same Schurian finiteness.
\end{lemma}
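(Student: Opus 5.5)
The plan is to show directly that the face semigroups of $\cA$ and $\cA_{\mathrm{ess}}$ are isomorphic. Then $T_{\cA}\cong T_{\cA_{\mathrm{ess}}}$ as semigroup algebras, and the statements about representation type and Schurian finiteness follow at once, since both are invariant under algebra isomorphism.

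First, essentiality of $\cA_{\mathrm{ess}}$. Let $q:\R^n\to\R^n/X$ be the quotient map. Each $H_i$ contains $X$, so $\overline H_i=q(H_i)$ is a hyperplane of the $r$-dimensional space $\R^n/X$, and $q^{-1}(\overline H_i)=H_i$. The $\overline H_i$ are pairwise distinct because $q^{-1}$ recovers $H_i$. Their intersection is
\[
    q\Bigl(\bigcap_i H_i\Bigr)=q(X)=0,
\]
using that preimages commute with intersections. Choosing an isomorphism $\R^n/X\cong\R^r$ finishes this step.

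Next, the faces. Each defining form $\ell_i$ of $H_i$ vanishes on $X$, so it factors as $\ell_i=\bar\ell_i\circ q$, and $\bar\ell_i$ is a defining form of $\overline H_i$. Consequently $\operatorname{sign}\ell_i(v)=\operatorname{sign}\bar\ell_i(q(v))$ for every $v\in\R^n$. Thus $v$ and $q(v)$ have the same sign vector. Since $q$ is surjective, the set of sign vectors realized by points of $\R^n$ equals the set realized by points of $\R^n/X$.

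A face is a nonempty set determined by its sign vector. So the assignment $F\mapsto q(F)$ is a bijection $\cF(\cA)\to\cF(\cA_{\mathrm{ess}})$ that preserves sign vectors; its inverse is $G\mapsto q^{-1}(G)$. The Tits product is defined purely by the coordinatewise sign-vector rule, so this bijection is a semigroup isomorphism. Extending it linearly gives the algebra isomorphism $T_{\cA}\cong T_{\cA_{\mathrm{ess}}}$.

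An alternative route is through \cref{lem:saliola_bound_quiver}: the map $Y\mapsto Y/X$ is a dimension-shifting lattice isomorphism $L(\cA)\to L(\cA_{\mathrm{ess}})$. However, the sign-vector argument is more elementary and avoids citing Saliola.

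The only point that needs care is that the set of realized sign vectors is exactly preserved. This comes down to the factorization $\ell_i=\bar\ell_i\circ q$ together with the surjectivity of $q$. Nothing here depends on the field $\K$.
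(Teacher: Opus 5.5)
Your proposal is correct and follows essentially the same argument as the paper. Both factor the defining forms through $\R^n/X$, observe that sign vectors are constant on cosets of $X$, so both arrangements realize the same set of face sign vectors, and then use the coordinatewise Tits product to get a semigroup isomorphism. Your extra checks, namely that the $\overline H_i$ are distinct and that $G\mapsto q^{-1}(G)$ is an explicit inverse, only make explicit what the paper leaves implicit.
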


\begin{proof}
Write $H_i=\ker(\alpha_i)$ with $\alpha_i\in(\R^n)^*$ a nonzero linear
form. Since $X\subseteq H_i$,
each $\alpha_i$ vanishes on $X$ and descends to a functional
$\overline\alpha_i\in(\R^n/X)^*$ with $\overline H_i=\ker(\overline\alpha_i)$.
Moreover,
\[
    \bigcap_{i=1}^m\overline H_i
    =\left(\bigcap_{i=1}^mH_i\right)/X
    =0,
\]
so the quotient arrangement is essential.

For $v\in\R^n$, its sign vector with respect to $\cA$ is
\[
    \operatorname{sgn}_{\cA}(v)
    =\bigl(
       \operatorname{sgn}\alpha_1(v),\dots,
       \operatorname{sgn}\alpha_m(v)
     \bigr).
\]
If $x\in X$, then $\alpha_i(v+x)=\alpha_i(v)$ for every $i$. Hence the sign
vector depends only on the coset $v+X$, and the two arrangements have the same
set of face sign vectors. Their products also agree, since the Tits product is
given by
\[
    (\sigma\circ\tau)_i=
    \begin{cases}
        \sigma_i, & \sigma_i\neq0,\\
        \tau_i,   & \sigma_i=0.
    \end{cases}
\]
Thus $\cF(\cA)\cong\cF(\cA_{\mathrm{ess}})$, and taking semigroup
algebras gives the claimed isomorphism.
\end{proof}

To pass from small arrangements to larger ones, we use deletion. The
following quotient construction preserves both wildness and Schurian
infiniteness in the required direction.

\begin{lemma}[Deletion]\label{lem:deletion_tits_algebra}
Let $\cA=\{H_1,\dots,H_m\}$ be a central arrangement in $\R^n$, and let
$\cB=\{H_{i_1},\dots,H_{i_k}\}\subseteq\cA$ be a subarrangement. Then there is a surjective algebra
homomorphism
\[
    T_{\cA}\twoheadrightarrow T_{\cB}.
\]
In particular, if $T_{\cB}$ is wild, then $T_{\cA}$ is wild; if
$T_{\cB}$ is Schurian-infinite, then $T_{\cA}$ is Schurian-infinite.
\end{lemma}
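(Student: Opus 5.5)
The plan is to write the surjection down directly on the level of face semigroups; the quiver presentation is not needed. Each face of $\cA$ is determined by its sign vector $\epsilon=(\epsilon_1,\dots,\epsilon_m)\in\{+,0,-\}^m$. Define the restriction map
\[
    \pi:\cF(\cA)\longrightarrow\cF(\cB),\qquad
    (\epsilon_1,\dots,\epsilon_m)\longmapsto(\epsilon_{i_1},\dots,\epsilon_{i_k}).
\]
First check that $\pi$ is well defined. If $F$ is a face of $\cA$ and $v\in F$, then the sign vector of $v$ with respect to $\cB$ is exactly the restricted vector. Hence $\pi(F)$ is the (nonempty) face of $\cB$ containing $F$.

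Next check that $\pi$ is a semigroup homomorphism. The Tits product is computed coordinatewise by the rule $\gamma_i=\epsilon_i$ if $\epsilon_i\neq0$ and $\gamma_i=\delta_i$ otherwise. Each output coordinate depends only on the same coordinates of the two inputs, so restricting coordinates commutes with taking products: $\pi(FG)=\pi(F)\pi(G)$.

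Then check surjectivity. Given a face $G$ of $\cB$, choose $v\in G$ and let $F$ be the face of $\cA$ containing $v$. Then $\pi(F)=G$, because both have the sign vector of $v$ on the coordinates of $\cB$.

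Extending $\pi$ linearly gives an algebra homomorphism $\K[\cF(\cA)]\to\K[\cF(\cB)]$. It is surjective because its image contains the basis $\cF(\cB)$. Its kernel $\a$ is a two-sided ideal, so $T_{\cB}\cong T_{\cA}/\a$. The two inheritance statements then follow immediately from \cref{lem:quotient_inheritance}.

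There is no real obstacle here. The only point needing care is the well-definedness and surjectivity of $\pi$. Both come from the geometric fact that faces of $\cA$ refine faces of $\cB$, and from realizing faces by points, which avoids any question of which sign vectors actually occur.
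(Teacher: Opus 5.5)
Your proposal is correct and follows essentially the same argument as the paper's proof. Both restrict sign vectors to the coordinates of $\cB$, use the coordinatewise form of the Tits product to obtain a semigroup homomorphism, and prove surjectivity by realizing a face of $\cB$ by a point. Both then extend linearly and invoke \cref{lem:quotient_inheritance}. Your explicit well-definedness check, that the restricted vector is the face of $\cB$ containing $F$, is a small extra detail that the paper leaves implicit.
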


\begin{proof}
Restricting a sign vector to the coordinates indexed by $\cB$ defines the
map $\pi:\cF(\cA)\longrightarrow\cF(\cB)$.
It is surjective: for a face $G$ of $\cB$, choose any point $v\in G$ and take
the face of $\cA$ containing $v$; its sign vector restricts to that of $G$.
Since the Tits product is defined coordinatewise, $\pi$ is a semigroup
homomorphism. Its linear extension gives the required surjective algebra map.
The final assertion follows from the quotient-algebra inheritance in
\cref{lem:quotient_inheritance}.
\end{proof}

To conclude, we have the following theorem.
\begin{theorem}\label{cor:four_or_more_wild}
Let $\cA$ be a central arrangement containing at least four distinct
hyperplanes. Then $T_{\cA}$ is wild.
\end{theorem}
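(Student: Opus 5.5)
The plan is to reduce to a subarrangement of exactly four hyperplanes and then invoke the small-case results. Choose four distinct hyperplanes $H_1,\dots,H_4\in\cA$ and let $\cB=\{H_1,H_2,H_3,H_4\}$. By the deletion lemma (\cref{lem:deletion_tits_algebra}) there is a surjection $T_{\cA}\twoheadrightarrow T_{\cB}$. By \cref{lem:quotient_inheritance}, it therefore suffices to show that $T_{\cB}$ is wild.

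Next, essentialize $\cB$ using \cref{lem:essentialization}. With $X=H_1\cap\cdots\cap H_4$ and $r=\operatorname{codim}X$, we have $T_{\cB}\cong T_{\cB_{\mathrm{ess}}}$. Here $\cB_{\mathrm{ess}}$ is a central essential arrangement of four hyperplanes $\overline H_i=H_i/X$ in $\R^r$. These four hyperplanes remain distinct: each $H_i$ contains $X$, so $H_i\mapsto H_i/X$ is injective on subspaces containing $X$.

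It remains to pin down $r$. Since the four defining forms span a space of dimension $r$, we have $r\le4$. Since there are at least two distinct hyperplanes, the forms are not all proportional, so $r\ge2$. We then split into three cases.

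\begin{itemize}
\item If $r=2$, then $\cB_{\mathrm{ess}}$ consists of four distinct lines in $\R^2$. \Cref{th:rank_two_wild_embedding} shows that $A_4$ is wild.
\item If $r=3$, \cref{thm:four_planes_r3_wild} applies directly.
\item If $r=4$, \cref{thm:four_hyperplanes_r4_wild} applies directly.
\end{itemize}

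In all three cases $T_{\cB}$ is wild, and hence so is $T_{\cA}$.

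There is no real obstacle here; the argument is a bookkeeping assembly of earlier results. The only points needing care are the distinctness of the $\overline H_i$ after essentialization and the bound $2\le r\le4$, since these guarantee that the small-case theorems apply exactly as stated.
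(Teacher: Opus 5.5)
Your proof is correct and uses the same ingredients as the paper: deletion, essentialization, and the three four-hyperplane results in ranks $2$, $3$ and $4$. The difference is only in the bookkeeping. The paper first essentializes $\cA$ and splits by $\operatorname{rk}(\cA)$, choosing a four-element subarrangement of controlled rank, or no deletion at all in rank $2$. You instead delete to an arbitrary four-element $\cB$ first and split by $\operatorname{rk}(\cB)\in\{2,3,4\}$, which is slightly more uniform and avoids selecting hyperplanes with independent covectors.
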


\begin{proof}
By \cref{lem:essentialization}, we may assume that $\cA$ is essential.
Essentialization does not identify distinct hyperplanes, since each $H_i$ is
the inverse image of $H_i/X$ under the quotient map. Let $r$ be the rank of
$\cA$, equivalently the dimension of the span of its defining covectors. Since
$\cA$ contains at least four distinct central hyperplanes, one has $r\geq2$.

If $r=2$, the result follows from the explicit embedding of
\cref{th:rank_two_wild_embedding}.

If $r=3$, choose three hyperplanes whose defining covectors are linearly
independent and adjoin any fourth distinct hyperplane. The resulting
subarrangement $\cB$ is essential in dimension three, so
$T_{\cB}$ is wild by \cref{thm:four_planes_r3_wild}.
The quotient map of \cref{lem:deletion_tits_algebra} then implies
that $T_{\cA}$ is wild. No general-position assumption on the four
hyperplanes is needed.

If $r\geq4$, choose four hyperplanes with linearly independent defining
covectors and let $\cB$ be the resulting rank-four subarrangement.
Its essentialization is an essential arrangement of four hyperplanes in
$\R^4$, so its Tits algebra is wild by
\cref{thm:four_hyperplanes_r4_wild}. Essentialization identifies
this algebra with $T_{\cB}$, and deletion gives a quotient
$T_{\cA}\twoheadrightarrow T_{\cB}$. Hence $T_{\cA}$ is wild in this
case as well.
\end{proof}

We now give the Schurian-finiteness criterion for all central arrangements,
including nonessential arrangements and those of rank zero or one.
\begin{theorem}\label{thm:schurian_finiteness}
Let $\cA$ be a central arrangement of distinct hyperplanes in $\R^n$.
Then $T_{\cA}$ is Schurian-finite if and only if $|\cA|\leq2$.
\end{theorem}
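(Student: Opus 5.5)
The plan is to treat the two directions separately, after reducing to essential arrangements via \cref{lem:essentialization}. That lemma preserves $|\cA|$, since distinct hyperplanes stay distinct, and it preserves Schurian finiteness.

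\textbf{Sufficiency.} Suppose $|\cA|\le2$. The rank $r$ is then $0$, $1$ or $2$, and we treat each value in turn.
\begin{enumerate}
\item If $r=0$, then $\cA=\emptyset$. The face semigroup is a single face, so $T_{\cA}\cong\K$, which is Schurian-finite.
\item If $r=1$, the essentialization is an arrangement of distinct hyperplanes in $\R^1$. There is only one hyperplane there, namely $\{0\}$, so $|\cA|=1$. By the computation in the proof of \cref{prop:three_planes_tame}, $T_{\cA}\cong I(B_1)$. This is the path algebra of $A_2$, which is representation-finite and hence Schurian-finite. Alternatively one can verify directly that the three faces give a hereditary algebra with three indecomposables.
\item If $r=2$, essentiality forces $|\cA|=2$, so \cref{thm:rank_two_finite} applies and gives representation-finiteness, hence Schurian-finiteness.
\end{enumerate}

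\textbf{Necessity.} Suppose $|\cA|\ge3$ and choose a subarrangement $\cB$ of exactly three distinct hyperplanes. By \cref{lem:deletion_tits_algebra}, it suffices to show that $T_{\cB}$ is Schurian-infinite. Essentialize $\cB$ using \cref{lem:essentialization}. Its rank is $2$ or $3$: it cannot be $1$, since in rank one all hyperplanes coincide after essentialization.
\begin{enumerate}
\item If the rank is $2$, we get three distinct lines in $\R^2$, and \cref{thm:rank_two_schurian} with $m=3$ gives Schurian-infiniteness.
\item If the rank is $3$, we get three planes in $\R^3$, and \cref{prop:three_planes_shurian_infinite} applies.
\end{enumerate}

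\textbf{Main obstacle.} The main point needing care is the low-rank bookkeeping. We must justify that essentialization keeps the hyperplanes distinct (the argument already given in \cref{cor:four_or_more_wild}). We also need the identification of $T_{\cA}$ in rank $\le1$, including the empty arrangement if it is allowed, where $\cF$ is the single face $\R^n$. None of this is deep; the substantive content is entirely in the earlier results.
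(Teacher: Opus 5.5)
Your proposal is correct and follows the paper's proof. Sufficiency goes through $T_{\cA}\cong\K$, the one-arrow algebra, and \cref{thm:rank_two_finite} after essentialization; necessity goes through deletion to a three-element subarrangement of rank $2$ or $3$, followed by \cref{thm:rank_two_schurian} or \cref{prop:three_planes_shurian_infinite}. The differences are cosmetic: you split the sufficiency direction by rank rather than by $|\cA|$, and you identify the one-hyperplane algebra as $I(B_1)$ instead of citing Saliola's presentation. You should also rephrase ``the path algebra of $A_2$'', since $A_2$ already denotes the two-line Tits algebra in the paper.
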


\begin{proof}
If $|\cA|=0$, its face semigroup has one element and $T_{\cA}\cong\K$.
If $|\cA|=1$, essentialization reduces to one hyperplane in $\R$;
by Saliola's presentation its algebra is the path algebra of a single
arrow, which is representation-finite. If $|\cA|=2$, the two defining
covectors are independent, so essentialization gives two distinct lines
in $\R^2$ and \cref{thm:rank_two_finite} applies. Thus in each
case $T_{\cA}$ is representation-finite and hence Schurian-finite.

Now suppose that $|\cA|\geq 3$, and choose any three-element subarrangement
$\cB=\{H_1,H_2,H_3\}\subseteq\cA$.
By \cref{lem:deletion_tits_algebra}, there is a surjective algebra
homomorphism $T_{\cA}\twoheadrightarrow T_{\cB}$. Since the three
hyperplanes are distinct, the rank of $\cB$ is either $2$ or $3$.
If $\operatorname{rk}(\cB)=2$, essentialization and
\cref{thm:rank_two_schurian} show that $T_{\cB}$ is Schurian-infinite.
If $\operatorname{rk}(\cB)=3$, essentialization and
\cref{prop:three_planes_shurian_infinite} show that $T_{\cB}$
is Schurian-infinite. Thus $T_{\cB}$ is Schurian-infinite in either case.
Since it is a quotient of
$T_{\cA}$, quotient inheritance implies that $T_{\cA}$ is
Schurian-infinite.
\end{proof}

\ifUseBibliography

\fi

\ifUseIndex
  \printindex
\fi


\begin{thebibliography}{10}
\providecommand{\url}[1]{\texttt{#1}}
\providecommand{\urlprefix}{URL }
\providecommand{\doi}[1]{https://doi.org/#1}

\bibitem{AdachiIyamaReiten2014}
Adachi, T., Iyama, O. and Reiten, I., {$\tau$}-tilting theory, Compos. Math.
  \textbf{150}(3),  415--452 (2014). \doi{10.1112/S0010437X13007422}.

\bibitem{AssemSimsonSkowronski2006}
Assem, I., Simson, D. and Skowro\'nski, A., Elements of the Representation
  Theory of Associative Algebras, {Vol. 1}: Techniques of Representation
  Theory, London Math. Soc. Student Texts, vol.~65. Cambridge University Press,
  Cambridge (2006). \doi{10.1017/CBO9780511614309}.

\bibitem{Benson1991}
Benson, D.~J., Representations and Cohomology {I}: Basic Representation Theory
  of Finite Groups and Associative Algebras, Cambridge Studies in Advanced
  Mathematics, vol.~30. Cambridge University Press, Cambridge (1991).
  \doi{10.1017/CBO9780511623615}.

\bibitem{brownDiaconis1998}
Brown, K.~S. and Diaconis, P., Random walks and hyperplane arrangements, Ann.
  Probab.  \textbf{26}(4),  1813--1854 (1998).

\bibitem{chapotonRognerud2019}
Chapoton, F. and Rognerud, B., On the wildness of {Cambrian} lattices, Algebr.
  Represent. Theory  \textbf{22}(3),  603--614 (2019).
  \doi{10.1007/s10468-018-9789-1}.

\bibitem{DemonetIyamaJasso2019}
Demonet, L., Iyama, O. and Jasso, G., {$\tau$}-tilting finite algebras, bricks,
  and {$g$}-vectors, Int. Math. Res. Not. IMRN  \textbf{2019}(3),  852--892
  (2019). \doi{10.1093/imrn/rnx135}.

\bibitem{erdmannLim2025descent}
Erdmann, K. and Lim, K.~J., The representation type of the descent algebras of
  type {$\mathbb{A}$}, Sci. China Math.  \textbf{68}(12),  2827--2846 (2025).
  \doi{10.1007/s11425-024-2378-y}.

\bibitem{erdmannLim2026descent}
Erdmann, K. and Lim, K.~J., The representation type of the descent algebras
  (2026). \doi{10.48550/arXiv.2604.21619}, preprint, arXiv:2604.21619
  [math.RT].

\bibitem{GeigleLenzing1987}
Geigle, W. and Lenzing, H., A class of weighted projective curves arising in
  representation theory of finite dimensional algebras. In: Singularities,
  Representation of Algebras, and Vector Bundles, Lecture Notes in Math.,
  vol.~1273, pp. 265--297. Springer, Berlin (1987). \doi{10.1007/BFb0078849}.

\bibitem{Ringel1984}
Ringel, C.~M., Tame Algebras and Integral Quadratic Forms, Lecture Notes in
  Math., vol.~1099. Springer, Berlin (1984). \doi{10.1007/BFb0072870}.

\bibitem{saliola2009face}
Saliola, F.~V., The face semigroup algebra of a hyperplane arrangement, Canad.
  J. Math.  \textbf{61}(4),  904--929 (2009). \doi{10.4153/CJM-2009-046-2}.

\bibitem{Schiffler2014}
Schiffler, R., Quiver Representations. CMS Books in Mathematics, Springer, Cham
  (2014). \doi{10.1007/978-3-319-09204-1}.

\bibitem{schocker2006SolomonTits}
Schocker, M., The module structure of the {Solomon--Tits} algebra of the
  symmetric group, J. Algebra  \textbf{301}(2),  554--586 (2006).
  \doi{10.1016/j.jalgebra.2005.07.032}.

\bibitem{Stanley2007Arrangements}
Stanley, R.~P., An introduction to hyperplane arrangements. In: Miller, E.,
  Reiner, V. and Sturmfels, B. (eds.) Geometric Combinatorics, IAS/Park City
  Math. Ser., vol.~13, pp. 389--496. American Mathematical Society, Providence,
  RI (2007). \doi{10.1090/pcms/013/08}.

\end{thebibliography}
\end{document}